\newtheorem{prop}{Proposition}
\newtheorem{lemma}[prop]{Lemma}
\newtheorem{cor}[prop]{Corollary}
\newtheorem{thm}[prop]{Theorem}
\theoremstyle{definition}
\newtheorem*{defi}{Definition}
\newcommand{\ee}{\textrm{e}}
\begin{document}
\noindent

\title{Explicit constructions of extractors and expanders}

\author[]{Norbert Hegyv\'ari}
\address{Norbert Hegyv\'{a}ri, ELTE TTK,
E\"otv\"os University, Institute of Mathematics, H-1117
P\'{a}zm\'{a}ny st. 1/c, Budapest, Hungary}
\email{hegyvari@elte.hu}

\author[]{Fran\c cois Hennecart}
\address{Fran\c cois Hennecart,
Universit\'e de Lyon, F-69000, Lyon, France ;
Universit\'e de Saint-Etienne, F-42000, Saint-Etienne, France ; 
LAMUSE, 23 rue Michelon,
42023 Saint-Etienne, France} \email{francois.hennecart@univ-st-etienne.fr}

\thanks{Research of the first author
is partially supported by ``Balaton Program Project" and OTKA
grants K~61908, K~67676.
}
\date{\today}

%







%

\maketitle

\section{\bf Introduction}

The well-known Cauchy-Davenport theorem states that for any pair of sets
$A,B$ in $\mathbb{Z}_p$ such that $A+B\ne\mathbb{Z}_p$, we have
$|A+B|\geq |A|+|B|-1$ and this estimation
is sharp; for arithmetic progressions $A$, $B$ with common difference yield
$|A+B|=|A|+|B|-1$. Now a natural question arises; what can we say on the
image of a two variables (or more generally multivariable) polynomial.
One can ask which polynomial $f$ blows up its domain, i.e. if for any
$A,B\subseteq \mathbb{Z}_p,$ $|A|\asymp |B|$ then
$f(A,B):=\{f(a,b):a\in A; b\in B\}$ is ampler (in some uniform meaning) than $|A|$. As we
remarked earlier, the polynomial $f(x,y)=x+y$ is not admissible.

Let us say  that a polynomial $f(x,y)$ is an {\it expander} if $|f(A,B)|/|A|$
tends to infinity as $p$ tends to infinity (a more precise definition will be given
above).

According to the literature, very few is known about existence and construction of expanders; the only known explicit
construction is due to J.~Bourgain (see \cite{B}) who proved that the
polynomial $f(x,y)=x^2+xy$ is an expander. More precisely he proved
that if $p^\varepsilon <|A|\asymp |B|< p^{1-\varepsilon}$ then
$|f(A,B)|/|A|>p^{\gamma}$, where $\gamma=\gamma(\varepsilon)$ is a positive but inexplicit
real number.

Our aim is to extend of the class of
known expanders and to give some effective estimations for
$|f(A,B)|/|A|$. In particular in section \ref{s3} we will exhibit an infinite family of
two variables polynomials being expanders. The main tool is some
incidence inequality that will be also used  to construct explicit
extractors with three variables. A function $f:\mathbb{Z}^{3}\to\{-1,1\}$ is said to be
a $3$-source \textit{extractor} if under a certain condition on the size of $A,B,C$, the
sum $\sum_{(a,b,c)\in A\times B\times C}f(a,b,c)$ is small compared to the number of its terms
(see section \ref{newsection} for a sharp definition and the details).

Finally in the last section we show that extractors are connected with some additive questions.

\section{\bf Incidence inequalities for points and hyperplanes}

For  any prime number $p$,  we  denote by $\mathbb{F}_{p}$ the
fields with $p$ elements. The main tool used by Bourgain in
\cite{B} for exhibiting expanding maps and extractors is the
following Szemer\'edi-Trotter type inequality:

\begin{prop}[Bourgain-Katz-Tao Theorem]\label{p1}
Let $\mathcal{P}$ and $\mathcal{L}$ be respectively a set of
points and a set of lines in $\mathbb{F}^2_{p}$ such that
$$
|\mathcal{P}|,|\mathcal{L}|<p^{\beta}
$$
for some $\beta$, $0<\beta<2$. Then
$$
|\{(P,L)\in\mathcal{P}\times\mathcal{L}\ :\ P\in L\}|\ll
p^{(3/2-\gamma)\beta}
\quad\text{(as $p$ tends to infinity)},
$$
for some $\gamma>0$ depending only on $\beta$.
\end{prop}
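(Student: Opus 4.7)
The plan is to deduce the incidence bound from the sum-product theorem of Bourgain-Katz-Tao in $\mathbb{F}_p$, which asserts that for any $A\subset\mathbb{F}_p^*$ with $p^\delta<|A|<p^{1-\delta}$ one has
\[
\max(|A+A|,|A\cdot A|)\gg |A|^{1+\eta}
\]
for some $\eta=\eta(\delta)>0$. The argument proceeds by contradiction: I would suppose that the incidence count $I=|\{(P,L)\in\mathcal{P}\times\mathcal{L}:P\in L\}|$ satisfies $I\gg N^{3/2-\gamma}$, where $N=p^\beta$, and construct from the configuration a set $A\subset\mathbb{F}_p$ violating that estimate.

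The first step is to regularize the configuration by dyadic pigeonholing: up to a $\log N$ factor one may pass to a subfamily $\mathcal{L}'\subset\mathcal{L}$ and a scale $\mu$ so that every line of $\mathcal{L}'$ meets $\mathcal{P}$ in between $\mu$ and $2\mu$ points, with $\mu\,|\mathcal{L}'|\gg I/\log N$. A standard averaging then produces a \emph{rich} point $P_0\in\mathcal{P}$ lying on $\gg I/(N\log N)$ lines of $\mathcal{L}'$; after an affine change of coordinates moving $P_0$ to the origin, these lines form a pencil $\{y=sx:s\in S\}$ with $|S|\gg I/(N\log N)$.

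The heart of the matter is to exhibit, within $S$, a large subset $A$ that is simultaneously approximately closed under addition and multiplication. To that end I would pick a second rich anchor $Q_0\neq P_0$ lying on some line $y=s_0x$, and analyze the pencil of rich lines through $Q_0$: a point $P\in\mathcal{P}$ incident to lines of both pencils contributes an affine relation between the two slopes (a relation of the shape $s=s_0+\alpha(t-s_0)$ where $t$ is the slope of the line from $Q_0$ and $\alpha$ is determined by the abscissa of $P$). Tracking these relations, as in the original Bourgain-Katz-Tao argument, yields a refinement $A\subset S$ of essentially the same size for which $|A+A|$ and $|A\cdot A|$ are both bounded by $|A|^{1+o(1)}$.

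Applying the sum-product theorem to $A$---whose size lies in the admissible window $(p^\delta,p^{1-\delta})$ provided $\gamma$ is chosen small enough in terms of $\beta$ and $\eta$---contradicts the near-closure obtained above and forces the desired incidence bound. I expect the main obstacle to be this extraction step: one must carefully manage the logarithmic losses from the dyadic refinements and verify that a positive proportion of slopes in $S$ actually survive in both pencils simultaneously. It is precisely here that the affine group $\mathbb{F}_p^*\ltimes\mathbb{F}_p$ plays its role, converting geometric collinearity into the arithmetic closure that triggers the sum-product dichotomy.
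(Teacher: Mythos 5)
The paper does not prove Proposition~\ref{p1}. It is quoted as a known theorem of Bourgain, Katz and Tao (see \cite{BKT}); the remark that ``$\gamma$ can be calculated in terms of $\beta$ from the proof'' refers to the argument in that reference, and the paper then moves directly on to Lemma~\ref{lem1}, which it does prove. So there is no in-paper proof to compare your sketch against, and I can only assess it against the source. At the level of strategy your outline matches the original derivation: dyadic regularisation, a rich anchor $P_{0}$ whose pencil supplies a slope set $S$, a second anchor $Q_{0}$ translating shared incidences into affine relations on $S$, and extraction of a set $A\subset S$ that is nearly closed under addition and multiplication, contradicting sum-product.

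The genuine gap is that the extraction step is asserted rather than proved, and that step \emph{is} the Bourgain--Katz--Tao argument. The relation $s=s_{0}+\alpha(t-s_{0})$ you record does not by itself give $|A+A|,|A\cdot A|\ll |A|^{1+o(1)}$; it only produces many additive/multiplicative coincidences among quadruples of slopes, and turning those into a single well-sized near-closed set requires a Balog--Szemer\'edi--Gowers type refinement together with careful bookkeeping of how much of $S$ survives each dyadic and averaging step. You also need to check that the extracted $A$ lands in the admissible window $p^{\delta}<|A|<p^{1-\delta}$ of the sum-product theorem; this forces a compatibility between $\beta$, $\gamma$, $\delta$ and $\eta$ which is delicate precisely near $\beta\to 0$ and $\beta\to 2$ and is not addressed. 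As a side remark, the paper gives (via Vinh's Proposition~\ref{p2}) a much softer spectral route to an explicit $\gamma$, but only for $1<\beta<2$; below the threshold $\beta=1$ that bound is weaker than the trivial Cauchy--Schwarz estimate, which is exactly why the sum-product route you sketch is the one needed for the full range $0<\beta<2$, and why its extraction step cannot be waved through.
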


In this statement, $\gamma$ can be calculated in terms of $\beta$ from the proof,
but it would imply a cumbersome formula. We will need the following consequence:

\begin{lemma}\label{lem1}
Let $\mathcal{P}$ and $\mathcal{L}$ be respectively a set of
points and a set of lines in $\mathbb{F}^2_{p}$ such that
$
|\mathcal{L}|<p^{\beta}
$
for some $\beta$, $0<\beta<2$. Then
\begin{equation}\label{e0}
|\{(P,L)\in\mathcal{P}\times\mathcal{L}\ :\ P\in L\}|\ll
|\mathcal{P}|^{3/2-\gamma'}+p^{(3/2-\gamma')\beta}
\quad\text{(as $p$ tends to infinity)},
\end{equation}
for some $\gamma'>0$ depending only on $\beta$.
\end{lemma}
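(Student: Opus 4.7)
\medskip

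\noindent\textbf{Proof proposal.} The plan is to reduce to Proposition~\ref{p1} by a straightforward dichotomy on the size of $\mathcal{P}$, treating the regime where $|\mathcal{P}|$ is already controlled by $p^{\beta}$ and the regime where it is not.

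First, if $|\mathcal{P}|\le p^{\beta}$, then both $\mathcal{P}$ and $\mathcal{L}$ satisfy the hypothesis of Proposition~\ref{p1} (up to replacing $\beta$ by a value arbitrarily close to it, which only shrinks $\gamma$ slightly), so we immediately get
$$
|\{(P,L)\in\mathcal{P}\times\mathcal{L}\ :\ P\in L\}|\ll p^{(3/2-\gamma)\beta},
$$
which is absorbed by the second term on the right-hand side of \eqref{e0}.

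Second, if $|\mathcal{P}|>p^{\beta}$, the idea is to cut $\mathcal{P}$ into $k:=\lceil |\mathcal{P}|/p^{\beta}\rceil$ arbitrary subsets $\mathcal{P}_1,\dots,\mathcal{P}_k$ of size at most $p^{\beta}$. For each of them, Proposition~\ref{p1} applies to the pair $(\mathcal{P}_i,\mathcal{L})$ and yields $\ll p^{(3/2-\gamma)\beta}$ incidences. Summing over $i$,
$$
|\{(P,L)\in\mathcal{P}\times\mathcal{L}\ :\ P\in L\}|\ll \frac{|\mathcal{P}|}{p^{\beta}}\cdot p^{(3/2-\gamma)\beta}=|\mathcal{P}|\cdot p^{(1/2-\gamma)\beta}.
$$
Since $|\mathcal{P}|>p^{\beta}$ and $1/2-\gamma>0$ (which we may assume without loss of generality), we have $p^{(1/2-\gamma)\beta}<|\mathcal{P}|^{1/2-\gamma}$, so the above is $\ll |\mathcal{P}|^{3/2-\gamma}$, and this is absorbed in the first term of \eqref{e0}.

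Combining both cases gives \eqref{e0} with $\gamma'=\gamma$ (or any $\gamma'\in(0,\gamma)$ if one prefers to avoid the boundary case $|\mathcal{P}|=p^{\beta}$). There is no serious obstacle here; the only minor point requiring attention is the strict versus non-strict inequality in the hypothesis of Proposition~\ref{p1}, which is harmless since shrinking $\beta$ infinitesimally preserves the conclusion, and the fact that the $\gamma$ produced by Proposition~\ref{p1} depends only on $\beta$, which we keep fixed throughout the argument.
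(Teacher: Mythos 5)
Your proof is correct, and it takes a genuinely different route from the paper's. The paper also performs a dichotomy on $|\mathcal{P}|$, but at the threshold $p^{2-(2-\beta)/3}$: below it, Proposition~\ref{p1} is invoked directly (with a parameter between $\beta$ and $2-(2-\beta)/3$, so the resulting $\gamma'$ is $\min(\gamma(\beta),\gamma(2-(2-\beta)/3))$); above it, the authors instead fall back on the trivial bound $N(\mathcal{P},\mathcal{L})\le|\mathcal{L}|\,p<p^{1+\beta}$ and show via the normalization $\gamma<(2-\beta)/4$ that this is already dominated by $|\mathcal{P}|^{3/2-\gamma}$. You avoid the trivial $|\mathcal{L}|p$ bound entirely and instead partition $\mathcal{P}$ into $\lceil|\mathcal{P}|/p^\beta\rceil$ pieces of size at most $p^\beta$, apply Proposition~\ref{p1} to each piece paired with $\mathcal{L}$, and sum; the key observation $p^{(1/2-\gamma)\beta}<|\mathcal{P}|^{1/2-\gamma}$ in the regime $|\mathcal{P}|>p^\beta$ then gives the first term $|\mathcal{P}|^{3/2-\gamma}$ directly. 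One small advantage of your argument is that it only ever invokes Proposition~\ref{p1} with the single fixed parameter $\beta$, so it does not require controlling how $\gamma(\cdot)$ behaves at the auxiliary exponent $2-(2-\beta)/3$ (a point the paper glosses over with the $\min$); the slight disadvantage is that it gives $\gamma'=\gamma(\beta)$ only after the harmless normalization $\gamma<1/2$. Both approaches yield the same shape of bound with comparable exponents.
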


\begin{proof} We denote by $N(\mathcal{P},\mathcal{L})$ the left-hand side of
\eqref{e0}.

We may freely assume that in Proposition \ref{p1}, 
\begin{equation}\label{e1}
\gamma=\gamma(\beta)<\frac{2-\beta}4.
\end{equation}
If $|\mathcal{P}|<p^{2-(2-\beta)/3}$, then the result follows plainly from Proposition \ref{p1}
with 
$$
\gamma'=\min(\gamma(\beta),\gamma(2-(2-\beta)/3)).
$$
Otherwise, we use the obvious bound $N(\mathcal{P},\mathcal{L})\le |\mathcal{L}|p<p^{1+\beta}$
from which we deduce
$$
N(\mathcal{P},\mathcal{L})<p^{(2-(2-\beta)/3)(3/2-\gamma)}\le |\mathcal{P}|^{3/2-\gamma}
$$
by \eqref{e1}. Thus \eqref{e0} holds with $\gamma'=\gamma$.
\end{proof}

In \cite{LAV}, the author established a generalization of Proposition \ref{p1}
by obtaining an incidence inequality for points an hyperplanes in $\mathbb{F}_{p}^{d}$.
It can be read as follows:

\begin{prop}[L.A. Vinh \cite{LAV}]\label{p2}
Let $d\ge2$. Let $\mathcal{P}$ be a set of points in $\mathbb{F}_{p}^{d}$ and $\mathcal{H}$ be a set of hyperplanes
in $\mathbb{F}_{p}^{d}$. Then
$$
\left|\{(P,H)\in \mathcal{P}\times \mathcal{H}\ :\ P\in H\}\right|
\le \frac{|\mathcal{P}||\mathcal{H}|}{p}+(1+o(1))p^{(d-1)/2}(|\mathcal{P}||\mathcal{H}|)^{1/2}.
$$
\end{prop}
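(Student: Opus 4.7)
The plan is to treat incidences as edges of the point--hyperplane bipartite graph and to exploit its spectrum via the expander mixing lemma. First I would form the bipartite graph $G$ with vertex classes $V_1=\mathbb{F}_p^d$ and $V_2$ the set of affine hyperplanes, joined by $P\sim H$ iff $P\in H$. Straightforward counts give $|V_2|=p(p^d-1)/(p-1)$, with $r_2=p^{d-1}$ points on each hyperplane and $r_1=(p^d-1)/(p-1)$ hyperplanes through each point, so $G$ is biregular.

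Next I would locate the second singular value by computing $MM^T$ for the biadjacency matrix $M$: its $(P,P')$-entry counts hyperplanes containing both points, so it equals $r_1$ on the diagonal, and when $P\ne P'$ a hyperplane $\{x:a\cdot x=b\}$ contains both iff $a\cdot(P-P')=0$ and $b=a\cdot P$, giving exactly $\mu=(p^{d-1}-1)/(p-1)$ such hyperplanes, \emph{independently of the pair}. Consequently
$$MM^T=(r_1-\mu)I+\mu J=p^{d-1}I+\mu J,$$
whose eigenvalues are $r_1r_2$ on the all-ones direction and $p^{d-1}$ (with multiplicity $p^d-1$) on its orthogonal complement. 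Hence $M$ has top singular value $\sqrt{r_1r_2}$ and every other non-zero singular value equals $p^{(d-1)/2}$.

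Then I would apply the expander mixing argument: for $\mathcal{P}\subseteq V_1$, $\mathcal{H}\subseteq V_2$ the incidence count is $\langle \mathbf{1}_{\mathcal{P}},M\mathbf{1}_{\mathcal{H}}\rangle$. Expanding both indicators in the singular bases of $M$, the principal component contributes $(\sqrt{r_1r_2}/\sqrt{|V_1||V_2|})|\mathcal{P}||\mathcal{H}|=|\mathcal{P}||\mathcal{H}|/p$ (using $|E|/(|V_1||V_2|)=1/p$), while the remaining components are controlled via Cauchy--Schwarz by $p^{(d-1)/2}\|\mathbf{1}_{\mathcal{P}}\|_2\|\mathbf{1}_{\mathcal{H}}\|_2=p^{(d-1)/2}(|\mathcal{P}||\mathcal{H}|)^{1/2}$. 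Combining these bounds yields the claimed inequality, with the stated $1+o(1)$ factor being slack.

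The hard part will be the rigidity observation that \emph{every} pair of distinct points lies on exactly the same number $\mu$ of hyperplanes; once that identity is in hand the eigenvalue computation collapses to diagonalising $aI+bJ$ and the remainder is textbook mixing, so all the genuine content of the proposition is concentrated in that short linear-algebra count.
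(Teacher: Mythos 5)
The paper itself does not prove Proposition~\ref{p2}; it is cited directly from \cite{LAV}, so there is no in-paper argument to compare against. Your reconstruction is in fact Vinh's own argument: build the biregular point--hyperplane incidence bipartite graph, use the crucial strong regularity (every unordered pair of distinct points lies on exactly $\mu=(p^{d-1}-1)/(p-1)$ common hyperplanes, so $MM^T=p^{d-1}I+\mu J$), read off the second singular value $p^{(d-1)/2}$, and apply the bipartite expander mixing lemma with edge density $|E|/(|V_1||V_2|)=r_2/|V_1|=1/p$. All the counts ($|V_2|$, $r_1$, $r_2$, $\mu$, $r_1-\mu=p^{d-1}$, $r_1r_2=p^{d-1}+\mu p^d$) check out, and the argument actually yields the inequality with constant $1$ rather than $1+o(1)$, confirming your remark that the stated factor is slack.
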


From this, L.A. Vinh deduced in \cite{LAV} that in Proposition \ref{p1}, $\gamma$ can
be taken equal to $\frac{\min\{\beta-1;2-\beta\}}4$ whenever $1<\beta<2$.

\section{\bf A family of expanding maps of two variables}\label{s3}

For any prime number $p$, let $F_p:\mathbb{F}_p^k\to\mathbb{F}_p$
be an arbitrary function in $k$ variables in $\mathbb{F}_p$. One
says that the family of maps $F:=(F_p)_p$, where $p$ runs over the
prime numbers, is an \textit{expander} (in $k$ variables) if for
any $\alpha$, $0<\alpha <1$, there exist
$\epsilon=\epsilon(\alpha)>0$ such that for any positive real
numbers $L_1\le L_2$, and a positive constant $c=c(F,L_1,L_2)>0$
not depending on $\alpha$
 such that for any prime $p$ and for any $k$-tuples $(A_i)_{1\le i\le k}$  of subsets of $\mathbb{F}_{p}$ satisfying $L_1p^{\alpha}\le |A_i|\le L_2p^{\alpha}$ ($1\le i\le k$),
one has $|C_p|\ge c p^{\alpha+\epsilon}$
where $$C_p=F_p(A_1,A_2,\dots,A_k):=\{F_p(a_1,a_2,\dots,a_k)\ :\ (a_1,a_2,\dots,a_k)\in A_1\times A_2\times \cdots\times A_k\}.$$

If the maps $F_p$, $p$ prime, are induced by some function
$F:\mathbb{Z}^k\to\mathbb{Z}$, i.e. for any prime number $p$,
we have
$$
F_p(\pi_p(x_1),\dots,\pi_p(x_k))=\pi_p(F(x_1,\dots,x_k)),
$$
where $\pi_p$ is the canonical morphism from $\mathbb{Z}$ onto $\mathbb{F}_p$,
then we simply denote $F_p$ by $F$. If such $(F_p)_p$ is an expander, then
we will say that $F$ induces or is an expander.

For example, any integral polynomial function $F$ induces functions $F_p$ accordingly
denoted by $F$.
We will mainly concentrate our attention on the construction of expanders of this type.

In \cite{B}, the author proved that $F(x,y)=x^2+xy$ induces an expander and observed
that more general maps with two variables can be considered.
It is almost clear (see remark 1 in section \ref{s5}) that no map of the kind $f(x)+g(y)+c$ or $f(x)g(y)+c$
(where $c$ is a constant) can be an expander.
From this, one
deduces that maps of the type $F(x,y)=f(x)+(uf(x)+v)g(y)$ where $u,v\in\mathbb{F}_{p}$ and
$f$, $g$ are integral polynomials, are not expanders. It is clear if $u=0$, since in this case
$F(x,y)=f(x)+vg(y)$. If $u\ne0$, then  $F(x,y)=(f(x)+vu^{-1})(1+ug(y))-vu^{-1}$.
In order to exhibit expanders of the type $f(x)+h(x)g(y)$, we thus have
to assume that $f$ and $g$ are  affinely independent, namely there is no
 $(u,v)\in\mathbb{Z}^2$
such that $f(x)=uh(x)+v$ or $h(x)=uf(x)+v$.

We will show the following:

\begin{thm}\label{thmexp}
Let $k\ge1$ be an integer and $f$, $g$ be polynomials with integer coefficients, and
define for any prime number $p$, the map $F$ from
$\mathbb{Z}^2$ onto $\mathbb{Z}$  by
$$
F(x,y)=f(x)+x^kg(y)
$$
Assume moreover that  $f(x)$ is affinely independent to $x^k$.
Then
$F$ induces an expander.
\end{thm}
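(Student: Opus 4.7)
The plan is to estimate the multiplicative energy of $F$ on $A\times B$ via a Cauchy-Schwarz inequality combined with the incidence bound of Lemma \ref{lem1}. Setting $r(c)=|\{(a,b)\in A\times B:F(a,b)=c\}|$ for $c\in C:=F(A,B)$, one has $\sum_{c}r(c)=|A||B|$, so by Cauchy-Schwarz
$$
|C|\ge\frac{|A|^2|B|^2}{E},\qquad E=\sum_{c}r(c)^2,
$$
where $E$ counts the quadruples $(a_1,b_1,a_2,b_2)\in(A\times B)^2$ with $F(a_1,b_1)=F(a_2,b_2)$. The whole task reduces to bounding $E$ nontrivially.

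The defining equation rewrites as $a_1^k g(b_1)-a_2^k g(b_2)=f(a_2)-f(a_1)$, which for each fixed $(a_1,a_2)\in(A\setminus\{0\})^2$ is the equation of a line $L_{a_1,a_2}$ in the plane with coordinates $(X,Y)=(g(b_1),g(b_2))$. Setting $\mathcal{P}=g(B)\times g(B)$ and $\mathcal{L}=\{L_{a_1,a_2}\}$ (as a set of geometric lines), an incidence count gives
$$
E\ll (\deg g)^2\cdot M\cdot I(\mathcal{P},\mathcal{L})+E_{\mathrm{deg}},
$$
where $M$ denotes the maximal number of pairs $(a_1,a_2)\in A^2$ defining the same geometric line, $I(\mathcal{P},\mathcal{L})$ is the incidence count, and $E_{\mathrm{deg}}\ll p^{2\alpha}$ absorbs the contributions with $a_1a_2=0$.

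The crucial use of the affine independence of $f$ and $x^k$ is to force $M=O_{f,g,k}(1)$. Two nonzero pairs $(a_1,a_2)$ and $(a_1',a_2')$ yield the same line iff there exist $t\in\mathbb{F}_p^*$ and a $k$th root of unity $\eta$ with $a_1'=ta_1$, $a_2'=\eta ta_2$, and $f(\eta ta_2)-f(ta_1)=t^k(f(a_2)-f(a_1))$. Expanding $f(x)=\sum_j c_j x^j$, this last identity reads $P_\eta(t)=0$ where $P_\eta$ has coefficients $c_j(\eta^j a_2^j-a_1^j)$ for $j\ne k$ and $-\sum_{j\ne k}c_j(a_2^j-a_1^j)$ for $j=k$; all these coefficients vanish identically (for generic $a_1,a_2$) exactly when $f(x)-c_kx^k$ is constant, which the hypothesis rules out. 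Hence for generic $(a_1,a_2)$ the polynomial $P_\eta$ has degree at most $\max(\deg f,k)$ and thus $O(1)$ roots. The residual pairs where some $P_\eta$ vanishes identically form at most $O(|A|)$ pairs (selected by algebraic conditions such as $a_1^{j_\star}=\eta^{j_\star}a_2^{j_\star}$ for some nonzero coefficient $c_{j_\star}$), and thus contribute at most $O(|A||B|)=O(p^{2\alpha})$ to $E$, which is absorbed.

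Finally, since $|\mathcal{P}|,|\mathcal{L}|\le L_2^2\,p^{2\alpha}$, applying Lemma \ref{lem1} with $\beta$ slightly larger than $2\alpha$ gives $I(\mathcal{P},\mathcal{L})\ll p^{(3-2\gamma')\alpha}$ for some $\gamma'=\gamma'(\alpha)>0$. Combining yields $E\ll p^{(3-2\gamma')\alpha}$, hence $|C|\gg p^{\alpha+2\alpha\gamma'}$, establishing expansion with $\epsilon=2\alpha\gamma'$. The main technical obstacle I anticipate is the multiplicity bookkeeping: one must carefully isolate the ``bad'' pairs where the polynomial $P_\eta$ degenerates, and show both that they are few and that they cannot inflate $E$ beyond the main incidence estimate.
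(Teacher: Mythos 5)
Your proposal is correct and follows essentially the same route as the paper: Cauchy–Schwarz to reduce to bounding the energy $E=\sum_c r(c)^2$, reinterpretation of $E$ as incidences between points indexed by $B$ (the paper first reduces to $g(y)=y$ and uses $B^2$, you keep $g$ and use $g(B)\times g(B)$ with a $(\deg g)^2$ bookkeeping factor, which is equivalent), control of the line multiplicity via the affine independence hypothesis, and an application of the Bourgain–Katz–Tao incidence bound. Your parametrization of coincident lines by $(t,\eta)$ is a cosmetic variant of the paper's equations \eqref{equat2}–\eqref{equat3}, and both handle the $O(|A|)$ ``bad'' pairs (your residual pairs, the paper's $A^2\smallsetminus X$) by direct absorption into $O(p^{2\alpha})$.
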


For $p$
sufficiently large, the image  $g(B)$ of any subset $B$ of
$\mathbb{F}_{p}$ has cardinality at least $|B|/\deg(g)$. It
follows that we can restrict our attention to maps of the type
$F(x,y)=f(x)+x^{k}y$. We let $d:=\deg(f)$.

Let $A$ and $B$ be subsets of $\mathbb{F}_{p}$ with cardinality
$|A|\asymp|B|\asymp p^{\alpha}$. For any $z\in\mathbb{F}_{p}$,
we denote by $r(z)$ the number
of couples $(x,y)\in A\times B$ such that $z=F(x,y)$, and by $C$
the set of those $z$ for which $r(z)>0$. By Cauchy-Schwarz inequality,
we get
$$
|A|^2|B|^2=\Big(\sum_{z\in\mathbb{F}_{p}}r(z)\Big)^2\le|C|\times
\Big(\sum_{z\in\mathbb{F}_{p}}r(z)^2\Big).
$$
One now deal with the sum $\sum_{z\in\mathbb{F}_{p}}r(z)^2$
which can be rewritten as the number of quadruples
$(x_{1},x_{2},y_{1},y_{2})\in A^{2}\times B^{2}$ such that
\begin{equation}\label{equat1}
f(x_{1})+x_{1}^{k}y_{1}=f(x_{2})+x_{2}^ky_{2}.
\end{equation}
For fixed $(x_{1},x_{2})\in A^2$ with
$x_{1}\ne0$ or $x_{2}\ne0$, \eqref{equat1} can be viewed as the equation of
a line $\ell_{x_{1},x_{2}}$ whose points $(y_{1},y_{2})$ are
in $\mathbb{F}_{p}^2$. For $(x_{1},x_{2})$ and $(a,b)$ in $A^2$,
the lines $\ell_{x_{1},x_{2}}$ and $\ell_{a,b}$ coincide if and only if
\begin{equation*}
\left\{
\begin{aligned}
(x_{1}b)^k&=(ax_{2})^k\\
{b}^k(f(x_{2})-f(x_{1}))&=x_{2}^k(f(b)-f(a)),
\end{aligned}
\right.
\end{equation*}
or equivalently
\begin{equation}\label{equat2}
\left\{
\begin{aligned}
(x_{1}b)^k&=(ax_{2})^k\\
(b^k-a^k)(f(x_{2})-f(x_{1}))&=(x_{2}^k-x_{1}^k)(f(b)-f(a)).
\end{aligned}
\right.
\end{equation}
At this point observe that by our assumption, there are only finitely many
prime numbers $p$ such that $f(x)=ux^k+v$
for some $(u,v)\in\mathbb{F}_{p}^2$, in which case
 the second equation in \eqref{equat2} holds trivially for any
 $x_{1}$ and $x_{2}$. We assume in the sequel that $p$ is not such a prime number.


Let $(a,b)\in A^2$ such that $a\ne0$ or $b\ne0$. Assume for instance that $b\ne0$.
By \eqref{equat2} we get $x_{1}=\frac{\zeta a x_{2}}{b}$
 for some $k$-th root modulo $p$ of unity $\zeta$. Moreover, we obtain
 \begin{equation}\label{equat3}
b^k\Big(f(x_{2})-f(\zeta \frac{a x_{2}}{b})\Big)
 -x_{2}^k(f(b)-f(a))=0,
 \end{equation}
which  is a polynomial equation in $x_{2}$. If we write
$f(x)=\sum_{0\le j \le d}f_{j}x^j$ then
$$
b^k(f(x)-f(\zeta \frac{ax}{b}))=\sum_{1\le j\le d}
b^k(1-\frac{\zeta^ja^j}{b^j})f_{j}x^j
$$
is a polynomial which could be identically equal
to $x^k(f(b)-f(a))$ only if the following  two conditions are satisfied:
\begin{align*}
&f(b)-f(a)=(b^k-a^k)f_{k},\\
&f_{j}\ne0\Rightarrow b^j=\zeta^ja^j.
\end{align*}
Since $f(x)$ is assumed to be affinely independent to $x^k$, we necessarily have $f_{j}\ne0$
for some $0<j\ne k$.
If $b^j=\zeta^ja^j$ for $\zeta$ being a $k$-th root of unity
in $\mathbb{F}_{p}$, then $b=\eta a$ where $\eta$ is some
$(kd!)$-root of unity in $\mathbb{F}_{p}$.
Let
$$
X:=\{(a,b)\in A^2\ :\ b^{kd!}\ne a^{kd!}\}.
$$
Since there are $kd!$ many
$(kd!)$-roots of unity in $\mathbb{F}_{p}$,
We have $|A^2\smallsetminus X|\le kd!|A|$, hence
$|X|\ge \frac{|A|^2}{2}$ for $p$ large enough.

If $(a,b)\in X$, then \eqref{equat3} has at most $\max(k,d)$ many solutions $x_{2}$,
thus $\eqref{equat2}$ has at most $k\max(k,d)$ many solutions $(x_{1},x_{2})$.
We conclude that the number of distinct lines $\ell_{a,b}$ when
$(a,b)$ runs in $ A^2$ is  $c(k,f)|A|^2$ where $c(k,f)$ can be chosen
equal to $(2k\max(k,d))^{-1}$, for $p$ large enough.
The set of all these pairwise distinct lines $\ell_{a,b}$ is denoted
by $\mathcal{L}$, its cardinality satisfies $|A|^2\ll |\mathcal{L}|\le|A|^2$,
as observed before.
Let $\mathcal{P}=B^2$. Then putting $N:=|A|^2\asymp|B|^2$,
we have by Proposition \ref{p1}
$$
\big\{(p,\ell)\in\mathcal{P}\times\mathcal{L}\ :\ p\in\ell\big\}\ll N^{3/2-\delta}
$$
for some $\delta>0$. Hence the number of solutions of
the system \eqref{equat2} is  $O(N^{3/2-\delta})=O(|A|^2|B|^{1-2\delta})$.
Finally $|C|\gg |B|^{1+2\delta}$, which is the desired conclusion.

\section{\bf Further results on expanders}

When $\alpha>1/2$, instead of Bourgain-Katz-Tao's incidence inequality,
we can use  Proposition \ref{p2}. By the remark following Proposition \ref{p2}, we can
replace in the very end of our proof of  Theorem \ref{thmexp}, $\delta$ by
$\min\{2\alpha-1;2-2\alpha\}$. It gives

\begin{prop} Let $F$ as in Theorem \ref{thmexp} and $\alpha>1/2$. For any
pair $(A,B)$ of subsets of $\mathbb{F}_p$ such that $|A|\asymp|B|\asymp p^{\alpha}$, we have
$$
|F(A,B)|\gg |A|^{1+\frac{\min\{2\alpha-1;2-2\alpha\}}{2}}.
$$
\end{prop}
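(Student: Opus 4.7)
The plan is to reproduce the proof of Theorem \ref{thmexp} verbatim up to its final incidence-counting step, and only there substitute Vinh's inequality (Proposition \ref{p2}) for the Bourgain--Katz--Tao inequality (Proposition \ref{p1}). As in Theorem \ref{thmexp}, I would first reduce to the case $F(x,y)=f(x)+x^k y$ since $|g(B)|\ge |B|/\deg(g)$. Then Cauchy--Schwarz applied to the representation function $r(z)$ reduces the problem to estimating the number of quadruples $(x_1,x_2,y_1,y_2)\in A^2\times B^2$ satisfying \eqref{equat1}, and the algebraic manipulation in Theorem \ref{thmexp} rewrites this count as the number of incidences between the point set $\mathcal{P}=B^2$ and the line set $\mathcal{L}$ in $\mathbb{F}_p^2$ where $|\mathcal{L}|\asymp|A|^2\asymp p^{2\alpha}$.

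At this final step I apply Proposition \ref{p2} with $d=2$ to the configuration $(\mathcal{P},\mathcal{L})$, both of cardinality of order $p^{2\alpha}$. Setting $\beta=2\alpha$, the remark following Proposition \ref{p2} supplies the explicit exponent $\gamma=\frac{\min\{2\alpha-1,\,2-2\alpha\}}{4}$, which is exactly the range $1<\beta<2$ covered by the hypothesis $\alpha>1/2$ (with $\alpha<1$ being implicit as $|A|<p$). Substituting into the incidence bound gives $N(\mathcal{P},\mathcal{L})\ll (|A|^2)^{3/2-\gamma}$, and hence by the Cauchy--Schwarz step $|C|\gg |A|^2|B|^2/N(\mathcal{P},\mathcal{L})\gg |B|^{1+2\gamma}=|A|^{1+\frac{\min\{2\alpha-1,\,2-2\alpha\}}{2}}$, which is the claimed inequality.

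The hard part is in reality minimal: all of the combinatorial and algebraic work was already done in Theorem \ref{thmexp}. The only point that needs checking is that the same argument producing the set $X\subseteq A^2$ with $|X|\ge |A|^2/2$ still guarantees $|\mathcal{L}|\asymp|A|^2$, so that both $|\mathcal{P}|$ and $|\mathcal{L}|$ have order $p^{2\alpha}$ and the symmetric form of Vinh's inequality applies with the stated value of $\gamma$. Once that is noted, the statement follows by direct substitution.
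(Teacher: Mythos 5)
Your proposal is correct and follows exactly the route the paper takes: re-run the proof of Theorem~\ref{thmexp} unchanged down to the incidence count, then use the remark after Proposition~\ref{p2} (Vinh's explicit value $\gamma=\frac{\min\{\beta-1,2-\beta\}}{4}$ for $1<\beta<2$) in place of the inexplicit $\gamma$ from Proposition~\ref{p1}, with $\beta=2\alpha$. Your bookkeeping $|C|\gg |B|^{1+2\gamma}$ with $\gamma=\frac{\min\{2\alpha-1,2-2\alpha\}}{4}$ correctly reproduces the stated exponent $1+\frac{\min\{2\alpha-1,2-2\alpha\}}{2}$; if anything your write-up is cleaner than the paper's terse prose, which says ``replace $\delta$ by $\min\{2\alpha-1,2-2\alpha\}$'' when it evidently means replace it by a quarter of that quantity. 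You also correctly observe that the constraint $1<\beta<2$, i.e.\ $1/2<\alpha<1$, is exactly what the hypothesis gives (with $\alpha<1$ forced since otherwise the bound is vacuous), and that the line set built in Theorem~\ref{thmexp} via the set $X$ has $|\mathcal{L}|\asymp|A|^2$, so both $|\mathcal{P}|$ and $|\mathcal{L}|$ are of order $p^{2\alpha}$ as needed.
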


The notion of expander which we discussed in the previous section is concerning the ability for a
two variables function $F$, inducing a sequence $(F_p)_p$, to provide a non trivial uniform lower bound for
$$
\kappa_{\alpha}(F)=\inf_{0<L_1<L_2}\liminf_{p\to\infty}\min\Big\{\frac{\ln|F_p(A,B)|}{\ln|A|}\,:\, A,B\subset\mathbb{F}_p\text{ and }
 L_1p^{\alpha}\le |A|,|B|\le L_2p^{\alpha}\Big\}
$$
For $F$ introduced in Theorem~\ref{thmexp}, we thus have
$$
1+ \frac{\min\{2\alpha-1;2-2\alpha\}}{2}\le   \kappa_{\alpha}(F)\le \min\{2,\frac1{\alpha}\},
$$
where the upper bound follows from the plain bounds $|F(A,B)|\le|A||B|$ and  $|F(A,B)|\le p$.
To our knowledge, no explicit example of function $F$ such that $\kappa_{\alpha}(F)=\min\{2,\frac1{\alpha}\}$ has been
already provided in the literature, even for a given real number $\alpha$ with $0<\alpha <1$. This question
is certainly much more difficult than the initial question of providing expander. This  suggests the
following definition:

\begin{defi}
Let $I\subset(0,1)$ be a non empty interval.
A family $F=(F_p)_p$ of two variables functions is called
\begin{itemize}
\item a \textit{strong expander according to $I$} if for any $\alpha\in I$, we have
$$
\kappa_{\alpha}(F)=\min\{2,\tfrac1{\alpha}\}. 
$$
\item a \textit{complete expander according to $I$} if for any $\alpha\in I$, for any positive real numbers $L_1\le L_2$, there exists a constant $c=c(F,L_1,L_2)$ such that
for any prime number $p$ and any pair $(A,B)$ of subsets of $\mathbb{F}_p$ satisfying
$L_1p^{\alpha}\le|A|,|B|\le L_2p^{\alpha}$,
we have
$$
|F_p(A,B)|\ge c p^{\min\{1;2\alpha\}}.
$$
\end{itemize}
\end{defi}
Complete expanders according to $I$ are obviously strong expanders
according to $I$. As indicated in \cite{B}, random mapping are strong expanders with
a large probability,
but no explicit example is known.  Furthermore functions $F$ introduced in Theorem
\ref{thmexp} could eventually be strong expanders, but we can not prove or disprove this
fact.  Nevertheless,
we can show that some of them are not complete expanders, in
particular Bourgain's function $F(x,y)=x^2+xy=x(x+y)$. Indeed, let
$A$ and $B$ be the interval $[1,p^{\alpha}/2]$ in $\mathbb{Z}_p$.
Then $A\cup(A+B)\subset[1,p^{\alpha}]$. If we assume $\alpha\le
1/2$, the following result which is a direct consequence of a result by
Erd\H os (see \cite{E2,E3}) implies that $F(A,B)=A\cdot(A+B)$ has cardinality at
most $o(p^{2\alpha})$.

\begin{lemma}[Erd\H os Lemma]
There exists a positive real number $\delta$ such that the number of different
integers $ab$ where $1\le a,b\le n$ is $O(n^2/(\ln n)^{\delta})$.
\end{lemma}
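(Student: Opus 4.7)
The plan is to deploy the classical Erd\H os multiplication table argument, whose main analytic input is the Hardy--Ramanujan upper bound on the distribution of $\Omega(m)$, the number of prime factors of $m$ counted with multiplicity. The guiding heuristic is that a typical integer $a\le n$ satisfies $\Omega(a)\approx \log\log n$, so a product $ab$ of two typical integers in $[1,n]$ has $\Omega(ab)=\Omega(a)+\Omega(b)\approx 2\log\log n$, which is about twice the typical value $\log\log(n^2)\sim\log\log n$ for a random integer up to $n^2$. Products are therefore concentrated on an atypical, hence sparse, subset of $[1,n^2]$.

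Fix a small $\varepsilon\in(0,1/2)$ and partition $[1,n]=G\cup E$ with
$$
G=\{a\le n\ :\ \Omega(a)\ge (1-\varepsilon)\log\log n\},\qquad E=[1,n]\setminus G.
$$
The Hardy--Ramanujan inequality
$$
|\{m\le N\ :\ \Omega(m)=k\}|\ll \frac{N}{\log N}\cdot\frac{(\log\log N+c_0)^{k-1}}{(k-1)!},
$$
valid uniformly in $k$ with an absolute constant $c_0$, combined with Stirling's formula gives $|E|\ll n/(\log n)^{\delta_1}$ for some $\delta_1=\delta_1(\varepsilon)>0$. Pairs $(a,b)\in[1,n]^2$ with $a\in E$ or $b\in E$ therefore yield at most $2n|E|\ll n^2/(\log n)^{\delta_1}$ distinct products.

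For pairs $(a,b)\in G\times G$, the complete additivity of $\Omega$ gives $\Omega(ab)=\Omega(a)+\Omega(b)\ge 2(1-\varepsilon)\log\log n$, so the product set of $G$ with itself lies in
$$
T:=\{m\le n^2\ :\ \Omega(m)\ge 2(1-\varepsilon)\log\log n\}.
$$
Summing the Hardy--Ramanujan estimate over $k\ge 2(1-\varepsilon)\log\log n$ with $N=n^2$, and using Stirling once more, one checks that the tail is geometric and $|T|\ll n^2/(\log n)^{\delta_2}$ for some $\delta_2=\delta_2(\varepsilon)>0$; this is where $\varepsilon<1/2$ (equivalently $2(1-\varepsilon)>1$) enters. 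Combining the two estimates yields the lemma with $\delta=\min(\delta_1,\delta_2)$.

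The only real technical point is verifying the exponent: writing $K=c\log\log N$ with $c>1$, Stirling shows that $(\log\log N)^K/K!=(\log N)^{c-1-c\log c+o(1)}$, and the exponent $Q(c)=c-1-c\log c$ is strictly negative for every $c\ne 1$ (attaining its maximum $Q(1)=0$). This concavity is what converts the Hardy--Ramanujan envelope into an honest power saving in $\log n$; everything else in the argument is routine bookkeeping.
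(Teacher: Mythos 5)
The paper does not prove this lemma; it cites Erd\H os \cite{E2,E3} and remarks that Tenenbaum's refinement gives the optimal exponent $\delta = 1-(1+\ln\ln2)/\ln2$. So there is no ``paper's proof'' to compare against. Your argument is the classical multiplication-table argument, and the overall strategy is correct: split $[1,n]$ into a good set $G$ (many prime factors) and a sparse exceptional set $E$; crudely bound the products meeting $E$ by $2n|E|$; observe that products from $G\times G$ are atypically prime-factor-rich and hence confined to a sparse subset $T$ of $[1,n^2]$. The exponent $Q(c)=c-1-c\log c$ with concave maximum $Q(1)=0$ is exactly the right power saving.

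There is, however, one genuine technical gap. The Hardy--Ramanujan envelope
$$
\pi_k(N)\ll\frac{N}{\log N}\frac{(\log\log N+c_0)^{k-1}}{(k-1)!}
$$
is uniform in $k$ for $\omega$ (distinct prime divisors), \emph{not} for $\Omega$; for $\Omega$ it holds uniformly only in the range $k\le(2-\delta_0)\log\log N$ (any fixed $\delta_0>0$, implied constant depending on $\delta_0$) and fails badly beyond that, since $\pi_k^{\Omega}(N)\ge1$ whenever $2^k\le N$ while the right-hand side is far below $1$. Your bound on $|E|$ is safe, as it only uses $k<(1-\varepsilon)\log\log n$. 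But your bound on $|T|$ sums the envelope over \emph{all} $k\ge2(1-\varepsilon)\log\log n$, reaching $k$ as large as $\log_2(n^2)$, where the envelope (and with it the ``tail is geometric'' claim) is no longer valid. You need complete additivity, so you cannot simply swap $\Omega$ for $\omega$.

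The standard patch is to abandon summing the pointwise bound and use a Rankin-type argument instead: for $1<y<2$ and $K=2(1-\varepsilon)\log\log n$,
$$
|T|\le y^{-K}\sum_{m\le n^2}y^{\Omega(m)}\ll y^{-K}n^2(\log n)^{y-1},
$$
using the Selberg--Delange mean value $\sum_{m\le N}y^{\Omega(m)}\ll N(\log N)^{y-1}$, which holds precisely for $0<y<2$. Optimizing at $y=2(1-\varepsilon)$ recovers $|T|\ll n^2(\log n)^{Q(2(1-\varepsilon))+o(1)}$. The constraint $\varepsilon\in(0,1/2)$ is doing double duty: $\varepsilon<1/2$ keeps $2(1-\varepsilon)>1$ so that $T$ is indeed an upper tail, while $\varepsilon>0$ keeps $y=2(1-\varepsilon)<2$, away from the pole where the $y^{\Omega}$ mean value degenerates (exactly the $\Omega$-versus-$\omega$ phenomenon above). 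With this substitution the proof is complete.
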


A sharper result due to G. Tenenbaum \cite{T} implies that $\delta$ can be taken equal to $1-\frac{1+\ln\ln 2}{\ln 2}$
in this statement.


In the same vein, we can extend Bourgain's result to more general functions:

\begin{prop} \label{p6}
Let $k\ge2$ be an integer, $u\in\mathbb{Z}$
and $F(x,y)=x^{2k}+ux^{k}+x^ky=x^k(x^k+y+u)$.
Then for any $\alpha$, $0<\alpha\le 1/2$, $F$ is not a complete expander according to
$\{\alpha\}$.
\end{prop}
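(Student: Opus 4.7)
The plan is to produce, for each $\alpha\in(0,1/2]$ and for $p$ large, sets $A,B\subseteq\mathbb{F}_{p}$ with $|A|,|B|\asymp p^{\alpha}$ and $|F(A,B)|=o(p^{2\alpha})$; this contradicts the would-be bound $|F(A,B)|\ge cp^{\min\{1,2\alpha\}}=cp^{2\alpha}$ required to call $F$ a complete expander. The strategy imitates the Bourgain example treated just above: the factorisation $F(x,y)=x^{k}(x^{k}+y+u)$ becomes, under the substitution $w=x^{k}$, $v=y+u$, exactly Bourgain's map $G(w,v)=w(w+v)$, so we try to force $A^{k}:=\{a^{k}:a\in A\}$ and a shift of $B$ both to lie in a short integer interval to which Erd\H os's Lemma can be applied.

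Concretely, fix a small constant $c>0$, set $M=\lfloor cp^{\alpha}\rfloor$, let $W=[1,M]\cap\{x^{k}:x\in\mathbb{F}_{p}^{*}\}$ be the set of $k$-th powers mod $p$ lying in $[1,M]$, and put
\[
A=\{x\in\mathbb{F}_{p}^{*}:x^{k}\in W\},\qquad B=[1,M].
\]
The map $x\mapsto x^{k}$ on $\mathbb{F}_{p}^{*}$ is exactly $k'$-to-one onto the subgroup of $k$-th powers, with $k':=\gcd(k,p-1)$ bounded uniformly in $p$, so $|A|=k'|W|$. Assuming the equidistribution estimate $|W|\asymp p^{\alpha}/k'$ discussed below, we get $|A|\asymp p^{\alpha}$, while $|B|=M\asymp p^{\alpha}$ is automatic.

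By construction $A^{k}=W\subseteq[1,M]$ and $A^{k}+u+B\subseteq[u+2,\,2M+u]$ as subsets of $\mathbb{Z}$, so that $F(A,B)=A^{k}\cdot(A^{k}+u+B)$ is contained in $\pi_{p}([1,M]\cdot[1,2M+|u|])$. Choosing $c$ small enough that $(2M+|u|)^{2}<p$ for $p$ large, which is possible because $\alpha\le1/2$ gives $M^{2}\ll p$, the projection $\pi_{p}$ is injective on the integer product set, and the Erd\H os Lemma quoted just above yields
\[
|F(A,B)|\le\bigl|\{ab:1\le a,b\le 2M+|u|\}\bigr|\ll \frac{M^{2}}{(\log M)^{\delta}}\ll \frac{p^{2\alpha}}{(\log p)^{\delta}}=o(p^{2\alpha}),
\]
as required.

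The main technical obstacle is the estimate $|W|\asymp p^{\alpha}/k'$, which asserts equidistribution of $k$-th powers in the short initial interval $[1,M]$. It reduces to bounding the character sum $\sum_{n\le M}\chi(n)$ for $\chi$ a non-principal multiplicative character of order $k'$: in the range $\alpha>1/4$ it follows from the Burgess inequality (and from P\'olya-Vinogradov when $\alpha>1/2$), whereas for $\alpha\le1/4$ it is more delicate and one has to refine the choice of $M$, enlarging it to the least value for which $|W|$ attains the required threshold while verifying that $M$ remains of order $p^{\alpha}$ so that the Erd\H os estimate still delivers $o(p^{2\alpha})$.
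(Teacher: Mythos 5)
Your overall strategy is the same as the paper's: exploit the factorisation $F(x,y)=x^{k}(x^{k}+y+u)$ to trap both factors in a short integer interval and apply the Erd\H{o}s multiplication-table lemma. But your execution has a genuine gap, which you yourself flag at the end. You try to force the $k$-th powers $A^{k}$ to lie in an \emph{initial} interval $[1,M]$ with $M\asymp p^{\alpha}$, and you need $|W|=|\,[1,M]\cap\{\text{$k$-th powers}\}\,|\asymp p^{\alpha}$. This is an equidistribution statement for the multiplicative subgroup of $k$-th powers in a short interval, and as you note it rests on character-sum bounds. Burgess only reaches $\alpha>1/4$; for $\alpha\le 1/4$ the required asymptotic $|W|\asymp M$ is simply open (we know there are at least about $\sqrt{M}$ such residues, from perfect $k$-th powers of integers, but nothing near $\asymp M$), and the proposed fix of ``enlarging $M$ to the least value for which $|W|$ attains the threshold while keeping $M\asymp p^{\alpha}$'' is circular: if $|W|$ fails to have positive density in $[1,M]$ for every $M\asymp p^{\alpha}$, there is nothing to enlarge to without leaving the range $\asymp p^{\alpha}$.

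The paper sidesteps this entirely with a trivial averaging/dilation trick: since the $k$-th powers form a subgroup $H\le\mathbb{F}_{p}^{*}$ of index $l=\gcd(k,p-1)\le k$, the $l$ cosets of $H$ partition $\mathbb{F}_{p}^{*}$, so some coset $aH$ meets $[1,L]$ in at least $L/l$ elements (pigeonhole). One then takes $A=\{x:ax^{k}\in[1,L]\}$ and $B=\{y:a(y+u)\in[1,L]\}$; the identity $F(x,y)=a^{-2}\cdot(ax^{k})\cdot(ax^{k}+a(y+u))$ shows $F(A,B)$ is a fixed dilate of a set of integer products from $[1,2L]\times[1,2L]$, and Erd\H{o}s applies with no character-sum input whatsoever. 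This works uniformly for all $0<\alpha\le 1/2$ (taking $L\asymp p^{\alpha}<\sqrt{p}/2$). To repair your proof you should replace ``force the $k$-th powers into $[1,M]$'' by ``force \emph{some dilate} of the $k$-th powers into $[1,M]$'', which makes the density statement a pigeonhole triviality; the rest of your argument then goes through essentially unchanged.
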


\begin{proof}
Let $L$ be a positive integer such that $L<\sqrt{p}/2$.
The set of $k$-th powers in $\mathbb{F}_p^*$ is a subgroup of $\mathbb{F}_p^*$ with  index
$l=\gcd(k,p-1)\le k$. Thus there exists $a\in\mathbb{F}_p^*$ such that
$[1,L]$ contains at least $L/l$ residue classes of the form $ax^k$, $x\in\mathbb{F}_p^*$.
We let $A=\{x\in\mathbb{F}_p^*\,:\, ax^k\in[1,L]\}$, which has cardinality at least
$L$ since each $k$-th power has $l$ $k$-th roots modulo $p$. We let
$B=\{y\in\mathbb{F}_p\,:\, a(y+u)\in[1,L]\}$. We clearly have $|B|=L$.
Moreover the elements of $F(A,B)$ are of the form $x^k(x^k+y+u)$ with
$x\in A$ and $y\in B$, thus are of the form $a'^2x'y'$ where
$x',y'\in[1,2L]$ and $aa'=1$ in $\mathbb{F}_p$.
By Erd\H os Lemma, we infer $|F(A,B)|=O(L^2/(\ln L)^{\delta})=o(L^2)$.
\end{proof}

By using a deep bound by Weil on exponential sums with polynomials, we may
slightly extend this result:

\begin{prop} \label{p6b}
Let $f(x)$ and $g(y)$ be  non constant integral polynomials
and $F(x,y)=f(x)(f(x)+g(y))$.
Then $F$ is not a complete expander according to
$\{1/2\}$.
\end{prop}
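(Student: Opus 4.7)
The plan is to adapt the proof of Proposition~\ref{p6}, replacing the subgroup structure of $k$-th powers by Weil's exponential-sum bound so that $f(A)$ and $g(B)$ can be forced simultaneously into a common scaled interval $a^{-1}[1,L]$ of $\mathbb{F}_p$. I will fix $L=\lfloor\sqrt{p/2}\rfloor$ and, for each $a\in\mathbb{F}_p^*$, set
\[
X(a):=|\{x\in\mathbb{F}_p:af(x)\in[1,L]\}|,\qquad Y(a):=|\{y\in\mathbb{F}_p:ag(y)\in[1,L]\}|.
\]
A change in the order of summation immediately gives $\sum_{a\ne 0}X(a)\sim pL$ and the analogous identity for $Y$, while the degree bounds force the pointwise inequalities $X(a)\le d_fL$ and $Y(a)\le d_gL$, where $d_f=\deg f$ and $d_g=\deg g$.

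The central step will be the lower bound $\sum_{a\ne0}X(a)Y(a)\gg pL^2$. Expanding the product and grouping the pairs $(x,y)$ according to the ratio $s:=f(x)/g(y)\in\mathbb{F}_p^*$, the sum rewrites as $\sum_{s\ne0}M(s)N(s)$, where $M(s)=|[1,L]\cap s\cdot[1,L]|$ (satisfying $\sum_sM(s)=L^2$) and $N(s)=|\{(x,y):f(x)=sg(y),\ g(y)\ne 0\}|$. This is where Weil intervenes: applied to $\sum_xe_p(\xi f(x))$ and $\sum_ye_p(-\xi sg(y))$ for $\xi\ne 0$, Weil's bound yields $N(s)=p+O(\sqrt p)$ for every $s\in\mathbb{F}_p^*$ at which the affine plane curve $f(x)-sg(y)=0$ is geometrically irreducible; the finitely many exceptional $s$ still satisfy $N(s)=O(p)$, and their total contribution to $\sum_sM(s)N(s)$ is absorbed in the error term.

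Granting this, pigeonhole supplies some $a\in\mathbb{F}_p^*$ with $X(a)Y(a)\gg L^2$, and the pointwise bounds $X,Y\le dL$ (with $d=\max(d_f,d_g)$) then force both $X(a),Y(a)\gg L\asymp\sqrt p$. I pick $A\subset\{x:af(x)\in[1,L]\}$ and $B\subset\{y:ag(y)\in[1,L]\}$ of cardinalities $\asymp\sqrt p$, which lie in $[L_1p^{1/2},L_2p^{1/2}]$ for constants $L_1\le L_2$ depending only on $f$ and $g$. By construction
\[
F(A,B)=a^{-2}\,(af(A))\cdot\bigl(af(A)+ag(B)\bigr)\subset a^{-2}\cdot[1,L]\cdot[2,2L]
\]
in $\mathbb{F}_p$, and the inequality $2L^2<p$ ensures that the integer products $ij$ with $i\in[1,L]$, $j\in[2,2L]$ are pairwise distinct modulo~$p$. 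Erd\H os's Lemma applied in $\mathbb{Z}$ then yields $|F(A,B)|=O(L^2/(\ln L)^\delta)=o(p)$, contradicting $|F(A,B)|\ge cp$ for every $c>0$ and showing that $F$ is not a complete expander at $\alpha=1/2$.

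The hard part is clearly the lower bound $\sum_aX(a)Y(a)\gg pL^2$. The first-moment identities alone leave open the possibility that the ``good'' sets $\{a:X(a)\gg L\}$ and $\{a:Y(a)\gg L\}$ fail to intersect, and it is exactly Weil's uniform input on the curves $f(x)=sg(y)$, $s\in\mathbb{F}_p^*$, that rules this out. Additional care is needed to verify that the at most $O_f(1)$ degenerate values of $s$ (for which $f(x)-sg(y)$ factors over the algebraic closure) contribute only $O(L^2)$ to $\sum_sM(s)N(s)$, which is negligible compared to the main term $pL^2$ extracted from the generic curves.
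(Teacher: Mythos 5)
Your approach differs substantially from the paper's, and it contains a genuine gap. The paper proves a clean auxiliary lemma via Weil's bound on one-variable exponential sums $\sum_x \ee_p(hf(x))$ (cf.\ Bombieri \cite{V}) together with elementary Fourier analysis: for any $u\in\mathbb{F}_p$, any positive $L<p/2$, and any $f$ of degree $k\ge1$ mod $p$, the number of $x$ with $f(x)\in(u-L,u+L)$ is at least $L-(k-1)\sqrt{p}$. With $L\asymp k\sqrt{p}$ this immediately produces sets $A=\{x: f(x)\in(0,2L)\}$ and $B=\{y: g(y)\in(0,2L)\}$ of size $\gg\sqrt{p}$, and then $f(x)$ and $f(x)+g(y)$ both lie in $(0,4L)$ for $(x,y)\in A\times B$, so Erd\H os's lemma finishes. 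No scaling parameter $a$ is needed: you worried that the preimage sets for $f$ and $g$ under a fixed interval might fail to intersect, but this cannot happen, since the lemma gives a lower bound $\gg\sqrt{p}$ for both preimages of the \emph{same} interval $(0,2L)$.

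The gap in your argument lies in the central step $\sum_{a\ne 0}X(a)Y(a)\gg pL^2$. You want to apply the Hasse--Weil bound to the affine curves $f(x)-sg(y)=0$ and claim that $N(s)=p+O(\sqrt{p})$ holds outside of ``at most $O_f(1)$ degenerate values of $s$''. This is false in general. If, for instance, $f(x)=f_1(x)^2$ and $g(y)=g_1(y)^2$, then over $\overline{\mathbb{F}}_p$ one has $f(x)-sg(y)=(f_1(x)-\sqrt{s}\,g_1(y))(f_1(x)+\sqrt{s}\,g_1(y))$ for \emph{every} $s\ne 0$, so the curve is geometrically reducible for all $s$ and $N(s)$ oscillates (between roughly $2p\deg f_1\deg g_1$ and $O(1)$, depending on whether $s$ is a square). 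More generally the set of $s$ for which $f(x)-sg(y)$ fails to be absolutely irreducible can have positive density. Even granting $N(s)=O(p)$ for the bad $s$, their contribution to $\sum_s M(s)N(s)$ is a priori $O\bigl(p\sum_{s\ \text{bad}}M(s)\bigr)$, which is of the same order $pL^2$ as the main term unless one shows $\sum_{s\ \text{bad}}M(s)=o(L^2)$; you do not address this, and it is not obvious. The paper's route sidesteps the entire issue by invoking Weil only for one-variable sums, where the $(k-1)\sqrt{p}$ bound is unconditional, rather than for curves with a family of moduli $s$.
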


We shall need the following result:

\begin{lemma} Let $u\in\mathbb{F}_p$, $L$ be a positive integer
less than $p/2$ and $f(x)$ be any integral polynomial of degree $k\ge1$ (as element of $\mathbb{F}_p[x]$).
Then the number $N(I)$ of
residues $x\in\mathbb{F}_p$ such that $f(x)$ lies in the interval  $I=(u-L,u+L)$ of $\mathbb{F}_p$
is at least $L-(k-1)\sqrt{p}$.
\end{lemma}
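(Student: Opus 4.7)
The plan is to bound $N(I)$ from below by combining a standard Fourier-analytic smoothing of the indicator of $I$ with Weil's bound for complete exponential sums with polynomial phase. Let $e_p(y):=\exp(2\pi i y/p)$ and, for a subset $E\subset\mathbb{F}_p$, write $\widehat{\mathbf{1}_E}(t)=\sum_{a\in E}e_p(ta)$. Put $J=\{0,1,\dots,L-1\}\subset\mathbb{F}_p$ and introduce the representation function
$$
r_J(m)=\#\{(a,b)\in J\times J\,:\,a-b\equiv m\pmod p\}.
$$
Since $L<p/2$, the elements $a-b$ with $(a,b)\in J^2$ are pairwise distinct modulo $p$ from the integers in $(-L,L)$, so $r_J(m)=\max(0,L-|m|)$, where $|m|$ denotes the absolute value of the representative of $m$ in $(-p/2,p/2]$. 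In particular $0\le r_J(m)\le L$, and $r_J(m)>0$ if and only if $m\in(-L,L)$.

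Next I consider the auxiliary sum
$$
T=\sum_{x\in\mathbb{F}_p}r_J(u-f(x)).
$$
Since $r_J(u-f(x))$ vanishes unless $f(x)\in I$ and is always at most $L$, I get immediately $T\le L\,N(I)$. I then expand $T$ via orthogonality of additive characters:
$$
T=\frac{1}{p}\sum_{t\in\mathbb{F}_p}e_p(-tu)\,|\widehat{\mathbf{1}_J}(t)|^2\,S_f(t),\qquad S_f(t):=\sum_{x\in\mathbb{F}_p}e_p(tf(x)).
$$
The $t=0$ term contributes $\frac1p\cdot L^2\cdot p=L^2$, which is the main term.

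For $t\ne 0$, since $f$ has degree $k$ in $\mathbb{F}_p[x]$, the polynomial $tf$ also has degree $k$, and Weil's bound on complete exponential sums gives $|S_f(t)|\le(k-1)\sqrt{p}$. Combining this with Parseval's identity $\sum_{t\in\mathbb{F}_p}|\widehat{\mathbf{1}_J}(t)|^2=p|J|=pL$, the total contribution of $t\ne 0$ is bounded by
$$
\frac{(k-1)\sqrt{p}}{p}\sum_{t\ne 0}|\widehat{\mathbf{1}_J}(t)|^2\le\frac{(k-1)\sqrt{p}}{p}\cdot pL=(k-1)L\sqrt{p}.
$$
Hence $T\ge L^2-(k-1)L\sqrt{p}$, and dividing by $L$ yields
$$
N(I)\ge T/L\ge L-(k-1)\sqrt{p},
$$
which is the desired inequality.

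The only delicate point is the legitimate invocation of Weil's bound for each $t\ne 0$: one needs that the leading coefficient of $f$ remains nonzero in $\mathbb{F}_p$, which is exactly the hypothesis "$f$ is of degree $k$ as an element of $\mathbb{F}_p[x]$", and (implicitly) that $p$ is large enough that $tf$ is not of the form $g^p-g+\mathrm{const}$; for $p>k$ this is automatic. Everything else amounts to a standard Fourier expansion plus Parseval.
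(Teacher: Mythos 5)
Your proof is correct and takes essentially the same route as the paper: both smooth the interval by convolving it with itself (your $r_J$ is the paper's $d_L$, differing only by an overall factor of $p$), expand via orthogonality of additive characters, isolate the $t=0$ main term, and control the remainder with Weil's bound and Parseval. If anything you are slightly more careful than the paper, which writes $k\sqrt{p}$ where the Weil bound actually gives $(k-1)\sqrt{p}$, the constant needed to match the stated conclusion.
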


\begin{proof}
We will use the formalism of Fourier analysis. Recall the following notation and properties:

Let $\phi,\psi:\mathbb{F}_p\to \mathbb{C}$ and $x\in\mathbb{F}_p$.
\begin{itemize}
\item $\phi*\psi(x):=\sum_{y\in\mathbb{F}_p}\overline{\phi(y)}\psi(x+y)$;

\item $\hat{\phi}(x):=\sum_{y\in\mathbb{F}_p}\phi(y)\ee\big(\frac{yx}p\big)$, where
$\ee(t):=\exp(2i\pi t)$;

\item $\widehat{\phi*\psi}(x)=\overline{\hat{\phi}(x)}\hat{\psi}(x)$;

\item $\sum_{y\in\mathbb{F}_p}|\hat{\phi}(y)|^2=p\sum_{y\in\mathbb{F}_p}|\phi(y)|^2$
(Parseval's identity).

\end{itemize}

Let $J$ be the indicator function of the interval $[0,L)$ of $\mathbb{F}_p$ and let
$$
T:=\sum_{h\in\mathbb{F}_p}\widehat{J*J}(h) S_f(-h,p)\ee\Big(\frac{hu}p\Big),
$$
where the exponential sum
$$
S_f(h,p):=\sum_{x\in\mathbb{F}_p}\ee\Big(\frac{hf(x)}p\Big)
$$
is known to satisfy the bound
$|S_f(h,p)|\le (k-1)\sqrt{p}$ whenever $h\ne0$ in $\mathbb{F}_p$ and $p$ is an odd prime number
(see for instance \cite{V}).

On the one hand, we have
\begin{align*}
T&=p\widehat{J*J}(0)+\sum_{h\in\mathbb{F}_p\smallsetminus\{0\}}\widehat{J*J}(h) S_f(-h,p)
\ee\Big(\frac{hu}p\Big)\\
&\ge pL^2-k\sqrt{p}\sum_{h\in\mathbb{F}_p\smallsetminus\{0\}}|\widehat{J*J}(h)|\\
&\ge pL^2-kLp^{3/2},
\end{align*}
by the bound for Gaussian sums and Parseval Identity. Hence
\begin{equation}\label{eqn2}
T\ge pL(L-k\sqrt{p})
\end{equation}

On the other hand,
\begin{align*}
T&= \sum_{h\in\mathbb{F}_p}\sum_{y\in\mathbb{F}_p}\sum_{z\in\mathbb{F}_p}
J(z)J(y+z)\ee\Big(\frac{h(y+u)}{p}\Big)\sum_{x\in\mathbb{F}_p}\ee\Big(-\frac{hf(x)}p\Big)\\
&= \sum_{x\in\mathbb{F}_p}\sum_{y\in\mathbb{F}_p}\sum_{z\in\mathbb{F}_p}
J(z)J(y+z)\sum_{h\in\mathbb{F}_p}\ee\Big(\frac{h(y+u-f(x))}p\Big)\\
&= p\sum_{x\in\mathbb{F}_p}d_L(f(x)-u),
\end{align*}
where $d_L(z)$ denotes the number of representations in $\mathbb{F}_p$ of $z$ under the form $j-j'$, $0\le j,j'<L$.
Since obviously $d_L(z)\le L$ for each $z\in\mathbb{F}_p$, we get
$$
T\le pL N(I).
$$
Combining this bound and \eqref{eqn2}, we deduce the lemma.
\end{proof}

\begin{proof}[Proof of Propostion \ref{p6b}]
We choose $p$ large enough so that both $f(x)$ and $g(y)$ are not constant polynomials
modulo $p$. Let $L=k\sqrt{p}$, and define $A$ (resp. $B$) to
be the set of the residue classes $x$ (resp. $y$) such that $f(x)$ (resp. $g(y)$)
lies in the interval
$(0,2L)$. By the previous lemma, one has $|A|,|B|\ge\sqrt{p}$.
Moreover for any
$(x,y)\in A\times B$, we have $f(x)$ and $f(x)+g(y)$ in the interval $(0,4L)$.
By Erd\H os Lemma, the number of residues modulo $p$ which can be written as
$F(x,y)$ with $(x,y)\in A\times B$, is at most $O(L^2/(\ln L)^{\delta})=o(p)$, as $p$ tends to infinity.
\end{proof}


\section{\bf A family of $3$-source extractors with exponential distribution}\label{newsection}

Let us fix the definition of the \textit{entropy} of a
 $k$-source $f=(f_p)_p$ where $f_p:\mathbb{F}_p^{k}\to\{-1,1\}$ as follows :
it is defined to be
the infimum, denoted $\alpha_0$, on $\alpha>0$ such that for any subset $A_{j}$, $j=1, \dots,k$,
of $\mathbb{F}_{p}$ with cardinality at least $p^{\alpha}$,
we have
$$
\sum_{\substack{a_{j}\in A_{j}\\
j=1,\dots,k}}f_p(a_{1},\dots,a_{k})=o(\prod_{j=1}^k|A_{j}|),\qquad
\text{as $p\to+\infty.$}
$$
When $\alpha_0<1$, $f$ is called $k$-source \textit{extractor} (with entropy $\alpha_0$).

The problem of finding $k$-source extractors can be reduced as follows.
We are asking the question to find  functions $F_p:\mathbb{F}_p^k\to\mathbb{F}_p$ such that
for any $k$-tuples $(A_{1},A_{2},\dots,A_{k})$ of subsets of $\mathbb{F}_{p}$
with cardinality $\asymp p^{\alpha}$
such that for any $r\in\mathbb{F}_{p}^{\times}$
\begin{equation}\label{eqn1}
\Big|\sum_{\substack{a_{j}\in A_{j}\\
j=1,\dots,k}}\ee_{r}\left(F_p(x_{1},x_{2},\dots,x_{k})\right)\Big|
=O(p^{-\gamma}\prod_{j=1}^k|A_{j}|), \quad \text{as $p$ tends to infinity,}
\end{equation}
for some $\gamma=\gamma(\alpha)$ and where we denote
$\ee_{r}(u)=\exp(\frac{ru}{p})$. If \eqref{eqn1} holds, Bourgain (cf. \cite{B}) has shown that
\begin{equation}\label{eqn1p}
\sum_{\substack{a_{j}\in A_{j}\\
j=1,\dots,k}}f_p(a_{1},\dots,a_{k})=O(p^{-\gamma'}\prod_{j=1}^k|A_{j}|),\qquad
\text{as $p\to+\infty$}
\end{equation}
for some $\gamma'>0$ where $f_p:=\mathrm{sgn}\sin \frac{2\pi F_p}p$.
It thus gives  a $k$-source extractor $f=(f_p)_p$.
An extractor $f$ such that \eqref{eqn1p} holds is said to have an \textit{exponential
distribution}.

In \cite[Proposition 3.6]{B}, Bourgain proved that
$F(x,y)=xy+x^2y^2$, by letting $F=F_p$ for any $p$, provides a
$2$-source extractor with exponential distribution and with
entropy $1/2-\delta$ for some $\delta>0$. We will show that this
result can be extended in order to give $3$-source extractors with
such an  entropy. It has to be mentioned that explicit $3$-source
extractors with arbitrary positive entropy exists, as shown in
\cite{BKSSW}, but these extractors do not yield an exponential
distribution. Here our goal is to exhibit $3$-source extractors
with exponential distribution.

\begin{thm}\label{thmNewSE}
Let $F(x,y,z)=a(z)xy+b(z)x^2g(y)+h(y,z)\in\mathbb{Z}[x,y,z]$ where $a(z)$, $b(z)$
are any non zero polynomial function,
$g(y)$ is any polynomial function of degree at least two and $h(y,z)$ an arbitrary
polynomial function. Let $L_1\le L_2$ be positive real numbers,
$\alpha\in(0,1)$ and
$A,B,C$ be subsets of $\mathbb{F}_p$ with cardinality satisfying  $L_1p^{\alpha}\le |A|,|B|,|C|\le L_2p^{\alpha}$.
For $r\in\mathbb{F}_p$, we denote
$$
S_{r}=\sum_{(x,y,z)\in A\times B\times C}\ee_{r}\left(F(x,y,z)\right).
$$
Then there exists $\gamma=\gamma(\alpha)>0$ such that
$$
\max_{r\in\mathbb{F}_p\smallsetminus\{0\}}|S_r|\ll p^{((22-\gamma/2)\alpha+1)/8},
$$
where the implied constant depends only on $F$, $L_1$ and  $L_2$.
\end{thm}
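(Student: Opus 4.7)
The exponent $((22-\gamma/2)\alpha+1)/8$ has denominator $8=2^3$, which suggests three applications of the Cauchy--Schwarz inequality, followed by an appeal to Lemma \ref{lem1}. The structure of $F(x,y,z)=a(z)xy+b(z)g(y)x^2+h(y,z)$ enters the proof in two essential ways: the part depending on $x$ is quadratic in $x$, so two Cauchy--Schwarzes collapse the $x$-variable; and the image set $\mathcal{P}=\{(a(z)y,b(z)g(y)):(y,z)\in B\times C\}\subset\mathbb{F}_p^2$ is not contained in any single line, thanks to $\deg g\ge 2$, so the resulting incidence problem is non-degenerate.

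\textbf{Cauchy--Schwarz reductions.} Since $h(y,z)$ does not involve $x$, I would factor it out and apply a first Cauchy--Schwarz over $(y,z)\in B\times C$, obtaining $|S_r|^2\ll |B||C|\sum_{(y,z)}|V(y,z)|^2$ with $V(y,z)=\sum_{x\in A}\ee_r(a(z)xy+b(z)g(y)x^2)$. Expanding the square, the phase becomes $(x_1-x_2)[a(z)y+(x_1+x_2)b(z)g(y)]$ and a diagonal contribution $|A||B|^2|C|^2$ is separated off. A second Cauchy--Schwarz in $(x_1,x_2)\in A^2$ on the non-diagonal part gives
\[
|S_r|^4\ll |A|^3|B|^4|C|^4 + |A|^2|B|^2|C|^2\,\Sigma,
\]
where $\Sigma=\sum_{(x_1,x_2)\in A^2}|Q(x_1,x_2)|^2$ and $Q(x_1,x_2)=\sum_{(y,z)\in B\times C}\ee_r(\ldots)$. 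Changing variables $u=x_1-x_2$, $w=x_1+x_2$, one sees that $\Sigma$ equals (up to the diagonal $u=0$) a sum $\sum_{u\ne 0,w}\rho(u,w)|V_{u,w}|^2$, where $\rho(u,w)$ counts representations $(u,w)=(x_1-x_2,x_1+x_2)$ in $A^2$ and $V_{u,w}=\sum_{(y,z)\in B\times C}\ee_r(u[a(z)y+wb(z)g(y)])$. A third Cauchy--Schwarz, separating $\sqrt{\rho}$ and $\sqrt{\rho}\,|V|^2$, reduces the problem to estimating the fourth moment $\sum_{u,w}|V_{u,w}|^4$.

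\textbf{Incidence bound and conclusion.} Summing $(u,w)$ over $\mathbb{F}_p^2$ by Parseval, $\sum_{(u,w)\in\mathbb{F}_p^2}|V_{u,w}|^4$ is essentially $p^2E(\mathcal{P})+O\bigl(p\cdot(|B||C|)^4\bigr)$, where $E(\mathcal{P})$ is the additive energy of $\mathcal{P}$. The term $p\cdot(|B||C|)^4$ is precisely what gives the additive constant $1$ in the final exponent; the term $p^2E(\mathcal{P})$ is controlled by Lemma \ref{lem1} applied with $\beta=2\alpha$ to a point--line incidence problem between $\mathcal{P}$ and an associated family of lines. The hypothesis $\deg g\ge 2$ ensures that $\mathcal{P}$ is not contained in any line, and that the fibres of the map $(y,z)\mapsto(a(z)y,b(z)g(y))$ are of bounded size (using also that $a,b$ are non-zero polynomials, hence vanish at only $O(1)$ values of $z$ modulo $p$ for $p$ large), so Lemma \ref{lem1} yields a saving $p^{-c\alpha}$ over the trivial bound, for some $c=c(\alpha)>0$. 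Combining this with the ``diagonal'' contribution $|A|^3|B|^4|C|^4\asymp p^{22\alpha}$ and tracking exponents through the three Cauchy--Schwarzes, one obtains $|S_r|^8\ll p^{22\alpha+1-\gamma\alpha/2}$, which is the claimed estimate. The main obstacle is the incidence step: checking the non-degeneracy conditions precisely, and verifying that the degenerate configurations (zeros of $a$ or $b$, or lines in the family $\{L_{u,w}\}$ tangent to $\mathcal{P}$) contribute only lower-order terms.
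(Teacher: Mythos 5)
Your Cauchy--Schwarz scaffolding is in the right spirit (three applications leading to an 8th power), but the incidence step is applied to the wrong side of the problem, and the resulting estimate is genuinely false in a case that the theorem must cover. The paper uses its two Cauchy--Schwarz inequalities over $(y,z)$ to raise the $x$-variable to a \emph{quadruple} $(x_1,x_2,x_3,x_4)\in A^4$ and a third Cauchy--Schwarz over $(x_1,\dots,x_4,z)$ to double $y$; it then applies the incidence inequality (Lemma~\ref{lem1}) to the quantity $\sum_{\underline\xi}\mu(\underline\xi)^2$, where $\mu$ counts quadruples of $x$'s on the parabola $\{(x,x^2)\}$, getting $\sum\mu^2\ll p^{(6-\gamma)\alpha}$. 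Crucially, the variable $z$ is kept intact and only enters at the very end through a plain orthogonality computation (``for $z\in C'$ the $\underline\xi$-sum is $p^2$ if $\underline\eta=\underline\eta'$ and $0$ otherwise''). You instead absorb both $y$ and $z$ into the point set $\mathcal{P}=\{(a(z)y,b(z)g(y)):(y,z)\in B\times C\}$ and need an energy bound $E(\mathcal{P})\ll p^{6\alpha-c\alpha}$ to close the argument.

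That bound fails. The theorem explicitly allows $a(z)$ and $b(z)$ to be nonzero \emph{constants} (this is exactly Bourgain's case $F(x,y)=xy+x^2y^2$, i.e.\ $a=b=1$, $g(y)=y^2$, $h=0$, with a dummy $z$). Then the map $(y,z)\mapsto(a(z)y,b(z)g(y))$ depends only on $y$, so $\mathcal{P}$ is the multiset consisting of $|C|$ copies of the curve $\mathcal{P}_0=\{(y,g(y)):y\in B\}$, and the fibres have size $|C|\asymp p^{\alpha}$ --- not bounded, contrary to your claim. One then has $E(\mathcal{P})=|C|^4E(\mathcal{P}_0)$, and since $\deg g\ge 2$ forces $E(\mathcal{P}_0)\asymp|B|^2$, we get $E(\mathcal{P})\asymp p^{6\alpha}$: this is exactly the trivial bound, with no power saving available. (Being ``not contained in a line'' is irrelevant here; what matters is that $\mathcal{P}$ is supported on a single algebraic curve, which is precisely the degenerate situation for additive energy.) Chasing your exponents, $|S_r|^8\ll p^{20\alpha}+p^{14\alpha+2}E(\mathcal{P})$ then gives only $p^{20\alpha+2}$, and for $\alpha\le 1/2$ this is at least $p^{22\alpha+1}$ with no room for the $p^{-\gamma\alpha/2}$ saving, so the claimed bound is not reached. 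The fix is the paper's: do not fold $z$ into the incidence problem; put the incidence on the $x$-quadruples on the parabola (where $a,b$ play no role), let the $y$-pairs contribute the cheap estimate $\sum_{\underline\eta}\nu(\underline\eta)^2\ll p^{2\alpha}$ coming from $\deg g\ge2$, and dispose of $z$ by orthogonality over $\underline\xi\in\mathbb{F}_p^2$, which works because $a(z),b(z)\ne0$ for $z\in C'=C\smallsetminus C_0$.

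Two smaller points: your diagonal term should be $|A|^2|B|^4|C|^4\asymp p^{10\alpha}$, not $|A|^3|B|^4|C|^4$ (you squared once too often in $|A|$); and since you exclude $u=x_1-x_2=0$ before the third Cauchy--Schwarz, the term $p(|B||C|)^4$ from $u=0$ should not appear in your fourth-moment bound --- the $+1$ in the final exponent in fact comes from the factor $p^2$ produced by Parseval/orthogonality, square-rooted, not from the $u=0$ contribution.
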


\begin{proof}
The proof starts as in \cite[Proposition 3.6]{B}.
For any  $r\in\mathbb{F}_{p}\smallsetminus\{0\}$, let
$$
S_{r}=\sum_{(x,y,z)\in A\times B\times C}\ee_{r}\left(F(x,y,z)\right).
$$

The first transformations consist in using repeatedly Cauchy-Schwarz inequality
in order to increase the number of variables and to rely
$
S_{r}
$
to the number
of solutions of diophantine systems. We simply denote $S_{r}$ by $S$.
We denote by $C_{0}$ the subset of $C$ formed with the elements $z\in C$
such that $a(z)b(z)=0$. We let $C':=C\smallsetminus C_{0}$. Then
$S=S_{0}+S'$ where in $S_{0}$ (resp. $S'$) the summation over $z$ is restricted
$z\in C_{0}$ (resp. $z\in C'$). Since the number of roots of the equation
$a(z)b(z)=0$ is finite, we have $|S_{0}|\ll |A||B|\ll p^{2\alpha}$. Moreover
we get
\begin{align*}
|S'|&\le \sum_{y,z}\Big|\sum_{x} \ee_{r}\left(a(z)xy+b(z)x^2g(y)\right)\Big|\\
&\le \Big(\sum_{y,z}1\Big)^{1/2}\Big(\sum_{\substack{y,z\\x_{1},x_{2}}}
\ee_{r}\left(a(z)(x_{1}-x_{2})y+b(z)(x_{1}^2-x_{2}^2)g(y)\right)
\Big)^{1/2},
\end{align*}
where the summation over $z$ is restricted to $z\in C'$.
Hence
\begin{align*}
|S'|^2 &\ll p^{2\alpha}\sum_{y,z}\Big|\sum_{x_{1},x_{2}}
 \ee_{r}\left(a(z)(x_{1}-x_{2})y+b(z)(x_{1}^2-x_{2}^2)g(y)\right)
 \Big|\\
&\ll p^{2\alpha}\Big(\sum_{y,z}1\Big)^{1/2}
\Big(\sum_{\substack{x_{1},x_{2}\\
x_{3},x_{4}\\y,z}}
 \ee_{r}\big(a(z)(x_{1}-x_{2}+x_{3}-x_{4})y+
 b(z)(x_{1}^2-x_{2}^{2}+x_{3}^2-x_{4}^{2})g(y)\big)
\Big)^{1/2}
\end{align*}
then
\begin{equation*}
|S'|^4\ll p^{6\alpha}\sum_{\substack{x_{1},x_{2}\\
x_{3},x_{4}\\y,z}}
\ee_{r}\big(a(z)(x_{1}-x_{2}+x_{3}-x_{4})y+
 b(z)(x_{1}^2-x_{2}^{2}+x_{3}^2-x_{4}^{2})g(y)\big)
\end{equation*}
By a new application of Cauchy-Schwarz inequality, we get
\begin{align*}
|S'|^{8} & \ll p^{12\alpha}\Big(
\sum_{\substack{x_{1},x_{2}\\x_{3},x_{4}\\z}}
\Big|\sum_{y}\ee_{r}\big(a(z)(x_{1}-x_{2}+x_{3}-x_{4})y+
 b(z)(x_{1}^2-x_{2}^{2}+x_{3}^2-x_{4}^{2})g(y)\big)
\Big|\Big)^2\\
& \ll p^{17\alpha} \sum_{z}\sum_{\substack{x_{1},x_{2}\\
x_{3},x_{4}\\y_{1},y_{2}}}
\ee_{r}\big(a(z)(x_{1}-x_{2}+x_{3}-x_{4})(y_{1}-y_{2})\\
&\hspace*{7.5cm}+
 b(z)(x_{1}^2-x_{2}^{2}+x_{3}^2-x_{4}^{2})(g(y_{1})-g(y_{2}))\big)
 \\
 &= p^{17\alpha}\sum_{z}\sum_{\underline{\xi},\underline{\eta}\in\mathbb{F}_{p}^2}
 \mu(\underline{\xi})\nu(\underline{\eta})\ee_{r}\big(a(z)\xi_{1}\eta_{1}+
 b(z)\xi_{2}\eta_{2}\big)
 \end{align*}
 where $\mu(\underline{\xi})$ is the number of quadruples $(x_{1},x_{2},x_{3},x_{4})\in A^4$ such that
 \begin{equation}\label{equat4}
 \left\{
 \begin{aligned}
 \xi_{1}&=x_{1}-x_{2}+x_{3}-x_{4},\\
 \xi_{2}&=x_{1}^2-x_{2}^2+x_{3}^2-x_{4}^2,
 \end{aligned}
 \right.
 \end{equation}
 and $\nu(\underline{\eta})$ is the number of couples $(y_{1},y_{2})\in B^2$
 such that
 \begin{equation*}
 \left\{
 \begin{aligned}
 \eta_{1}&=y_{1}-y_{2},\\
 \eta_{2}&=g(y_{1})-g(y_{2}).
 \end{aligned}
 \right.
 \end{equation*}
  Then clearly $\sum_{\underline{\eta}\in\mathbb{F}_{p}^2}
  \nu(\underline{\eta})^2$
 can be expressed as the number of quadruples
 $(y_{1},y_{2},y'_{1},y'_{2})\in B^4$ such that
 \begin{equation}\label{equat5}
\left\{
 \begin{aligned}
 y_{1}-y_{2}&=y'_{1}-y'_{2},\\
  g(y_{1})-g(y_{2})&=g(y'_{1})-g(y'_{2}).
 \end{aligned}
 \right.
 \end{equation}
 If $y'_{1}=y'_{2}$ in this system then $y_{1}=y_{2}$. Thus \eqref{equat5}
 has exactly $|B|^2$ solutions of the type $(y_{1},y_{2},y'_{1},y'_{1})$.
 If $y'_{1}$ and $y'_{2}$ are fixed so that $t=y'_{1}-y'_{2}\ne0$, then we can write $y_{1}=y_{2}+t$ and
 and clearly $g(y_{2}+t)-g(y_{2})=g(y'_{1})-g(y'_{2})$ has at most $\deg g-1$
 solutions $y_{2}$ (since $\deg g\ge2$). We thus have
 \begin{equation}\label{equat6}
 \sum_{\underline{\eta}\in\mathbb{F}_{p}^2}
 \nu(\underline{\eta})^2\ll p^{2\alpha}.
 \end{equation}

For any $\underline{\xi}=(\xi_1,\xi_2)\in\mathbb{F}_p^2$, we denote by
$\mu_1(\underline{\xi})$ (resp. $\mu_2(\underline{\xi})$) the number of solutions $(x_1,x_2,x_3,x_4)\in A^4$ of \eqref{equat4}
such that $x_1=x_2$ (resp. $x_1\ne x_2$). Then
$$
\sum_{\underline{\xi}\in\mathbb{F}_p^2}
\mu_1(\underline{\xi})^2=|A|^2\times N,
$$
where $N$ is the number of quadruples $(x_3,x_4,z_3,z_4)\in A^4$ such that
$$
\left\{
 \begin{aligned}
 x_{3}-x_{4}&=z_3-z_4,\\
  x_3^2-x_4^2&=z_3^2-z_4^2.
 \end{aligned}
 \right.
$$
By distinguishing solutions with $x_3=x_4$ and solutions with $x_3\ne x_4$, we
plainly obtain $N\le 2|A|^2$. Hence
\begin{equation}\label{equatn1}
\sum_{\underline{\xi}\in\mathbb{F}_p^2}
\mu_1(\underline{\xi})^2\ll p^{4\alpha}.
\end{equation}

For any fixed $t\in A$,
we denote by $\mu(\underline{\xi},t)$ the number of solutions
of the form $(x_{1},x_{2},t,x_{4})\in A^4$ with $x_1\ne x_2$ of the system \eqref{equat4}.
Eliminating $x_{4}$ by expressing it in terms
of $\xi_{1}$ using the first equation, we see
that  $\mu(\underline{\xi},t)$ is the number of couples
$(x_{1},x_{2})\in A^2$ with $x_1\ne x_2$
such that $\underline{\xi}$ lies on the curve
\begin{equation}
\xi'_{2}:=\xi_{2}+\xi_{1}^2=
2(x_{1}-x_{2}+t)\xi_{1}-(x_{1}-x_{2}+t)^2
+x_{1}^2-x_{2}^2+t^2.\label{equat7}
\end{equation}
Using the new variable $\xi'_{2}$ instead of $\xi_{2}$,
we get that each couple
$(x_{1},x_{2})\in A^2$ with $x_1\ne x_2$ defines a line $\ell_{x_1,w_2}$
in the plane $\mathbb{F}_{p}^2$ with equation
\begin{equation}\label{equat8}
\xi'_{2}=2(x_{1}-x_{2}+t)\xi_{1}-(x_{1}-x_{2}+t)^2
+x_{1}^2-x_{2}^2+t^2.
\end{equation}
It is clear that two couples $(x_{1},x_{2})\in A^2$  and
$(x'_{1},x'_{2})\in A^2$ with $x_1\ne x_2$ define the same line if and only if
$x_{1}-x_{2}=x'_{1}-x'_{2}$ and $x_{1}^2-x_{2}^2={x'_{1}}^2-{x'_{2}}^2$,
that is $(x_{1},x_{2})=(x'_{1},x'_{2})$.
It follows that all the lines $\ell_{x_1,x_2}$ with $x_1\ne x_2$ are pairwise distinct and
the number of these lines is equal to $|A|^2-|A|\ll p^{2\alpha}$. We let $\mathcal{L}=\{\ell_{x_1,x_2}\,:\,
(x_1,x_2)\in A^2,\, x_1\ne x_2\}$.
By applying Lemma \ref{lem1}, we get for some $\gamma=\gamma(\alpha)>0$
$$
|\{ [(\xi_{1},\xi'_{2});\ell]\in C_{k}\times \mathcal{L}:\
(\xi_{1},\xi'_{2})\in \ell\}|\ll
|C_{k}|^{3/2-\gamma} +p^{(3-2\gamma)\alpha}.
$$
where $C_{k}$ is the set of couples
$(\xi_{1},\xi'_{2})\in\mathbb{F}_{p}^2$ such that the
number of different
couples $(x_{1},x_{2})\in A^2$ with $x_1\ne x_2$ satisfying
equation \eqref{equat8} with $\xi_{1}-x_{1}+x_{2}-t\in A$ is at least $k$.
Since there is a one-to-one correspondance between the
couples $(\xi_1,\xi'_2)\in C_k$ and the couples $(\xi_1,\xi_2)\in \mathbb{F}_p^2$
such that $\mu(\underline{\xi},t)\ge k$, we plainly have $|C_{k}|\le p^{3\alpha}/k$. Furthermore,
for fixed $(\xi_{1},\xi'_{2})$ in $\mathbb{F}_{p}^2$,
each choice of $x_{1}\in A$ gives at most two different
$x_{2}\in A$ such that \eqref{equat8} holds. Hence
$C_{k}$ is empty if $k>2|A|$.
We let $c_{k}=|C_{k}|$. We obtain
$$
c_{k}k\ll c_{k}^{3/2-\gamma} +p^{(3-2\gamma)\alpha},
$$
giving either
$$
c_{k}k\ll p^{(3-2\gamma)\alpha}
$$
or
$$
k\ll c_{k}^{1/2-\gamma}.
$$
Since $c_{k}\ll p^{3\alpha}/k$, the last bound is available only if
$$
k\le k(\alpha,\gamma):= cp^{(3-6\gamma)\alpha/(3-2\gamma)}, \quad
\text{for some constant $c>0$.}
$$
We have
\begin{equation*}
\sum_{\underline{\xi}\in\mathbb{F}_{p}^2}
\mu(\underline{\xi},t)^2=
\sum_{1\le k\le 2|A|}k^2(c_{k}-c_{k+1})
=\sum_{1\le k\le 2|A|}(2k-1)c_{k},
\end{equation*}
by partial summation. It follows that
\begin{align*}
\sum_{\underline{\xi}\in\mathbb{F}_{p}^2}\mu(\underline{\xi},t)^2
&=\sum_{1\le k\le k(\alpha,\gamma)} (2k-1)c_{k}
+\sum_{k(\alpha,\gamma)< k\le 2|A|} (2k-1)c_{k}\\
&\le 2\sum_{1\le k\le k(\alpha,\gamma)} p^{3\alpha}+
\sum_{k(\alpha,\gamma)< k\le 2|A|} p^{(3-2\gamma)\alpha}\\
&\ll p^{12(1-\gamma)\alpha/(3-2\gamma)}+  p^{(4-2\gamma)\alpha}\\
&\ll p^{(4-\gamma)\alpha}.
\end{align*}
By Cauchy-Schwarz inequality, we get
\begin{equation*}
\sum_{\underline{\xi}\in\mathbb{F}_{p}^2}\mu_2(\underline{\xi})^2=
\sum_{\underline{\xi}\in\mathbb{F}_{p}^2}\Big(\sum_{t\in A}
\mu(\underline{\xi},t)
\Big)^2\le |A|\sum_{t\in A}\sum_{\underline{\xi}\in\mathbb{F}_{p}^2}
\mu(\underline{\xi},t)^2
\le |A|^2\sup_{t\in A}\sum_{\underline{\xi}\in\mathbb{F}_{p}^2}
\mu(\underline{\xi},t)^2\ll p^{(6-\gamma)\alpha},
\end{equation*}
giving with \eqref{equatn1}
\begin{equation}\label{equat9}
\sum_{\underline{\xi}\in\mathbb{F}_{p}^2}\mu(\underline{\xi})^2
\le 2\sum_{\underline{\xi}\in\mathbb{F}_{p}^2}\mu_1(\underline{\xi})^2
+ 2\sum_{\underline{\xi}\in\mathbb{F}_{p}^2}\mu_2(\underline{\xi})^2
\ll p^{(6-\gamma)\alpha}.
\end{equation}
This yields for $\sum\mu(\underline{\xi})^2$
a sharper bound than that could be expected in general, namely $O(p^{6\alpha})$.

Returning to the estimation of $S'$, we obtain
\begin{align*}
|S'|^8 & \ll p^{17\alpha}
\sum_{z\in C'}
\sum_{\underline{\xi}\in\mathbb{F}_{p}^2}\mu(\underline{\xi})
\Big|
\sum_{\underline{\eta}\in\mathbb{F}_{p}^2}
\nu(\underline{\eta})\ee_{r}\big(a(z)\xi_{1}\eta_{1}+
b(z)\xi_{2}\eta_{2}\big)\Big|\\
&
\ll p^{17\alpha}\sum_{z\in C'}
\Big(\sum_{\underline{\xi}\in\mathbb{F}_{p}^2}
\mu(\underline{\xi})^2\Big)^{1/2}
\Big(\sum_{\underline{\xi}\in\mathbb{F}_{p}^2}
\Big|\sum_{\underline{\eta}\in\mathbb{F}_{p}^2}
\nu(\underline{\eta})\ee_{r}\big(a(z)\xi_{1}\eta_{1}+
b(z)\xi_{2}\eta_{2}\big)\Big|^2
\Big)^{1/2}
\end{align*}
which is
$$
\ll p^{17\alpha}\sum_{z\in C'}\Big(\sum_{\underline{\xi}\in\mathbb{F}_{p}^2}
\mu(\underline{\xi})^2\Big)^{1/2}
\Big(\sum_{\underline{\eta},\underline{\eta}'\in\mathbb{F}_{p}^2}
\nu(\underline{\eta})\nu(\underline{\eta}')
\sum_{\underline{\xi}\in\mathbb{F}_{p}^2}
\ee_{r}\big(a(z)\xi_{1}(\eta_{1}-\eta'_{1})+
b(z)\xi_{2}(\eta_{2}-\eta'_{2})\big)
\Big)^{1/2}
$$
by Cauchy-Schwarz inequality. For $z\in C'$, the summation over $\underline{\xi}$
is $p^2$ if $\underline{\eta}=\underline{\eta}'$ and $0$
otherwise. It follows that
$$
|S'|^{8}\ll p^{17\alpha+1}|C'|\Big(\sum_{\underline{\xi}\in\mathbb{F}_{p}^2}
\mu(\underline{\xi})^2\Big)^{1/2}
\Big(\sum_{\underline{\eta}\in\mathbb{F}_{p}^2}\nu(\underline{\eta})^2\Big)^{1/2}.
$$
By \eqref{equat6} and \eqref{equat9}, this yields
$$
|S'|^{8}\ll p^{(22-\gamma/2)\alpha+1}
$$
hence
\begin{equation}\label{equat10}
|S|\le |S_{0}|+|S'|\ll p^{((22-\gamma/2)\alpha+1)/8}.
\end{equation}
\end{proof}

We may mention that in the statement of Theorem \ref{thmNewSE},
$\gamma(\alpha)$ is a continuous function of $\alpha$.
As a corollary, we have

\begin{cor}\label{cor9}
Let $F$ as in the theorem.
Then the extractor defined by  $\mathrm{sgn}\sin \frac{2\pi F}p$ has exponential distribution
and entropy at most $1/2-\delta$, for some $\delta>0$.
\end{cor}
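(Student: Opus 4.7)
The plan is to derive the corollary from Theorem~\ref{thmNewSE} in two steps: first, translating its exponential-sum bound into the Fourier-decay condition~\eqref{eqn1}; second, invoking Bourgain's sign-sine transfer (the implication \eqref{eqn1}$\Rightarrow$\eqref{eqn1p}) to obtain the exponential distribution of the extractor $f_p=\mathrm{sgn}\sin(2\pi F_p/p)$.

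For the first step, I would compare the bound $p^{((22-\gamma(\alpha)/2)\alpha+1)/8}$ of Theorem~\ref{thmNewSE} to the trivial quantity $|A||B||C|\asymp p^{3\alpha}$. A saving of $p^{-\gamma'}$ is obtained precisely when
\[
\frac{(22-\gamma(\alpha)/2)\alpha+1}{8}\le 3\alpha-\gamma',
\]
which is equivalent to $\alpha(2+\gamma(\alpha)/2)\ge 1+8\gamma'$. The borderline case $\gamma'=0$ amounts to $\alpha>1/(2+\gamma(\alpha)/2)$. Since $\gamma(1/2)>0$, we have $1/(2+\gamma(1/2)/2)<1/2$, and by the continuity of $\alpha\mapsto\gamma(\alpha)$ noted right before the corollary, the strict inequality persists for $\alpha$ in some neighbourhood $(1/2-\delta,\,1/2+\eta)$; for $\alpha\ge1/2$ it is automatic because $\gamma(\alpha)>0$. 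Choosing $\gamma'>0$ sufficiently small (and possibly shrinking $\delta$), one therefore has \eqref{eqn1} with exponent $\gamma'$ uniformly for every $\alpha>1/2-\delta$ and every triple $A,B,C\subset\mathbb{F}_p$ with $|A|,|B|,|C|\asymp p^\alpha$.

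For the second step, I would apply Bourgain's result (cited in the paper just after~\eqref{eqn1}) to $f_p=\mathrm{sgn}\sin(2\pi F_p/p)$. This converts~\eqref{eqn1} into the real-valued inequality~\eqref{eqn1p}, yielding
\[
\Bigl|\sum_{(a,b,c)\in A\times B\times C}f_p(a,b,c)\Bigr|\ll p^{-\gamma''}|A||B||C|
\]
for some $\gamma''>0$ and for every admissible $\alpha>1/2-\delta$. By the definitions recalled at the beginning of Section~\ref{newsection}, this means that $f=(f_p)_p$ is a 3-source extractor with exponential distribution and with entropy at most $1/2-\delta$.

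The main obstacle is already contained in Theorem~\ref{thmNewSE}; granted that theorem, the remaining argument reduces to the short exponent computation above combined with the continuity remark. A minor technical subtlety is that the entropy definition requires $|A_j|\ge p^\alpha$ rather than $|A_j|\asymp p^\alpha$, but this is standardly absorbed in the sign-sine transfer, which operates on arbitrary set sizes subject only to a lower bound. Because the dependence $\gamma=\gamma(\alpha)$ inherited from the Bourgain--Katz--Tao incidence bound is non-explicit, the resulting $\delta$ cannot be given an effective value.
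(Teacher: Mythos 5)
Your proposal is correct and follows essentially the same route as the paper: the exponent computation comparing the bound of Theorem~\ref{thmNewSE} to $p^{3\alpha}$ is exactly the paper's derivation of $\epsilon(\alpha)$ in \eqref{equat11}, the continuity argument around $\alpha=1/2$ is identical, and the final appeal to Bourgain's sign-sine transfer (with the Fourier coefficients $c_r$ satisfying $\sum|c_r|=O(\ln p)$) matches the paper's concluding step. Nothing is missing; the only difference is that the paper spells out the sign-sine transfer slightly more explicitly.
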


\begin{proof}
From Theorem \ref{thmNewSE}, we obtain that
$$
\max_{r\in\mathbb{F}_p\smallsetminus\{0\}}|S_r|\ll p^{3\alpha-\epsilon(\alpha)},
$$
where
\begin{equation}\label{equat11}
\epsilon(\alpha)=\frac{\alpha}8\Big(2+\frac{\gamma(\alpha)}2-\frac1{\alpha}\Big).
\end{equation}
Since $\gamma(1/2)>0$, we have $\epsilon(1/2)>0$, thus by continuity,
there exists $\delta>0$ such that for $\alpha>1/2-\delta$,
we have $\epsilon(\alpha)>0$.

The rest of the proof follows that in \cite{B}, namely we have
$$
\sum_{(x,y,z)\in A\times B\times C}\mathrm{sgn}\sin\left(\frac{2\pi  F(x,y,z)}{p}\right)
=
\sum_{r=1}^{p-1}c_rS_r + O(p^{3\alpha-1}),
$$
where the coefficients $c_r$ satisfy
$$
\mathrm{sgn}\sin\left(\frac{2\pi t}p\right)=\sum_{r=1}^{p-1}c_r \exp\left(\frac{2i\pi t}p\right)+O\Big(\frac1p\Big),
$$
and
$$
\sum_{r=1}^{p-1}|c_r|=O(\ln p).
$$
This gives
$$
\sum_{(x,y,z)\in A\times B\times C}\mathrm{sgn}\sin\left(\frac{2\pi  F(x,y,z)}{p}\right)
=O((\ln p)p^{3-\epsilon})
$$
and the corollary follows.
\end{proof}

\section{\bf Concluding remarks}\label{s5}

1. As indicated in section \ref{s3}, no function
of the type $F(x,y)=f(x)+g(y)$ or any translated of it is an expander.
Indeed let $I$ be an
interval with length $\asymp Cp^\alpha,$ ($0<\alpha <1, C>0$). By
the averaging argument there are $a$ and $b$ in
$\mathbb{F}_{p}$ such that
$$
|\{a+I\}\cap \{f(x):x\in\mathbb{F}_{p}\}|>C'p^\alpha,
$$
and
$$
|\{b+I\}\cap \{g(y):y\in\mathbb{F}_{p}\}|>C'p^\alpha,
$$ where $C'$ depends only on $C$ and the degree of $f$ and $g.$
Now let $A$ be the inverse image of $\{a+I\}\cap
\{f(x):x\in\mathbb{F}_{p}\}$ and let $B$ be the inverse image of
$\{b+I\}\cap \{g(y):y\in\mathbb{F}_{p}\}.$ Then the set $F(A,B)$ of all
elements of the form $F(x,y)$, $(x,y)\in A\times B$ is contained in
$a+b+2I$, hence the cardinality of $F(A,B)$ is at most a constant times the
cardinality of $A$ and $B$.

A similar argument  yields that no map of the kind $f(x)g(y)+c$
is an expander.

2. As quoted after Corollary \ref{cor9},
the functions $f_p(x,y)=\mathrm{sgn}\sin
F_p(x,y)$ give a 2-source extractor with entropy less than $1/2$,
if we let
 $F_p(x,y)=xy+x^2y^2$ or $F_p(x,y)=xy+g_p^{x+y},$ where $g_p$ is any generator
in $\mathbb{F}_{p}^{\times}.$ From the proof one can easily read that the
functions
\begin{equation}\label{e11}
xy+x^2h(y); \quad xh(y)+x^2y; \quad xy+x^2g_p^y; \quad xg_p^y+x^2y
\end{equation}
($h$ is any non-constant polynomial) induce also 2-source
extractors with entropy less than $1/2$ (see also
remark 4 below).

3. It is worth mentioning that for points and lines in $\mathbb{F}_{p}^2$,
 the bound given by the effective version of the Szemerédi-Trotter theorem
of \cite{LAV} is weaker then the trivial one in case
where the number $N$ of lines and points is less than $p$.
For this reason, it is seemingly not efficient for providing an effective
entropy less than $1/2$ for $k$-source extractor,
contrarily to Bourgain-Katz-Tao result which holds for
$p^{\varepsilon}<N<p^{2-\varepsilon}$.

4. Extractors are related to additive questions  in
$\mathbb{F}_{p}.$ In \cite{S} S\'ark\"ozy investigated the
following problem: let $A,B,C,D \subseteq \mathbb{F}_{p}$ be non-empty sets.
Then
the equation
$$
a+b=cd
$$
is solvable in $a\in A, b\in B, c\in C, d\in D$ provided
$|A||B||C||D|>p^3.$ This simple equation has many interesting
consequences. One can ask the more general question of investigating the
solvability of
\begin{equation}\label{e12}
a+b=F(c,d)
\end{equation}
where $F(x,y)$ is a two variables polynomial with integer
coefficients. Clearly the question is really interesting when we assume that
$|C|,|D|<\sqrt{p}.$

Let us say that $F(x,y)$ is an {\it essential} polynomial if
(under the condition $|C|,|D|<\sqrt{p}$) $|A||B|>p^2$ implies the
solvability of \eqref{e12}. So by the S\'ark\"ozy's result $F(x,y)=xy$
is an essential polynomial. From the proofs of propositions 3.6 and 3.7 of \cite{B},
it can be deduced that there exist $\delta>0$ and $\epsilon>0$ such that for any $r\in\mathbb{F}_p\smallsetminus\{0\}$ and for any $C,D\subset\mathbb{F}_p$ with
$|C|,|D|>p^{1/2-\delta}$,
\begin{equation}\label{equat100}
\Big|\sum_{c\in C,d\in D}\ee_r(F_p(c,d))\Big|=O(|C||D|p^{-\epsilon}),
\end{equation}
where $F=(F_p)_p$ is any one of the following families of functions:

- $F_p(x,y)=x^{1+u}y+x^{2-u}h(y)$ for any $p$, where we fix $u\in\{0,1\}$ and any non constant polynomial $h(y)\in\mathbb{Z}[y]$.

- $F_p(x,y)=x^{1+u}y+x^{2-u}g_p^y$ for any $p$ where $g_p$ generates $\mathbb{F}_p^{\times}$ and $u\in\{0,1\}$ is fixed.

This yields  the following result:

\begin{prop}
Let $(F_p)_p$ be one of the two families of functions defined above.
There exist real numbers $0<\delta,\delta'<1$ such that for any $p$ and for
any sets $A,B,C,D \subseteq \mathbb{F}_{p}$ fulfilling the
conditions
$$
|C|>p^{1/2-\delta}, \quad |D|>p^{1/2-\delta} \quad
|A||B|>p^{2-\delta'},
$$
there exist  $a\in A, b\in B, c\in C, d\in D$ solving
the equation
\begin{equation}\label{equat101}
a+b=F_p(c,d).
\end{equation}
\end{prop}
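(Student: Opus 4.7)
The plan is to count the number $N$ of quadruples $(a,b,c,d)\in A\times B\times C\times D$ with $a+b=F_p(c,d)$ via orthogonality of additive characters on $\mathbb{F}_p$, and then show that the main term dominates under the hypotheses of the proposition. Writing
$$
N=\frac{1}{p}\sum_{r\in\mathbb{F}_p}\Big(\sum_{a\in A}\ee_r(a)\Big)\Big(\sum_{b\in B}\ee_r(b)\Big)\Big(\sum_{c\in C,\,d\in D}\ee_r(-F_p(c,d))\Big),
$$
the contribution of $r=0$ is the \emph{main term} $M=|A||B||C||D|/p$, and I would like to show that the sum $E$ over $r\ne 0$ satisfies $|E|<M/2$, say; this will guarantee $N>0$.

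For the error term, I would apply the bound \eqref{equat100} to the inner $C\times D$ sum, which gives $|\sum_{c,d}\ee_r(-F_p(c,d))|\ll |C||D|p^{-\epsilon}$ for every nonzero $r$ (this is where the hypothesis $|C|,|D|>p^{1/2-\delta}$ is used, with $\delta$ chosen small enough that \eqref{equat100} applies to the chosen family). Then by Cauchy--Schwarz together with Parseval's identity,
$$
\sum_{r\ne 0}\Big|\sum_{a\in A}\ee_r(a)\Big|\Big|\sum_{b\in B}\ee_r(b)\Big|\le \Big(\sum_{r}\Big|\sum_{a\in A}\ee_r(a)\Big|^2\Big)^{1/2}\Big(\sum_{r}\Big|\sum_{b\in B}\ee_r(b)\Big|^2\Big)^{1/2}=p(|A||B|)^{1/2},
$$
so that $|E|\ll (|A||B|)^{1/2}|C||D|p^{-\epsilon}$.

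The condition $M\gg|E|$ then reduces to $(|A||B|)^{1/2}\gg p^{1-\epsilon}$, i.e.\ $|A||B|\gg p^{2-2\epsilon}$. I would therefore choose $\delta'$ to be any positive real strictly less than $2\epsilon$ (with $\epsilon$ coming from \eqref{equat100}); taking $\delta$ to be the corresponding threshold in \eqref{equat100} for the chosen family completes the argument and forces $N\ge 1$, hence the existence of the required quadruple.

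The only genuine issue is checking that \eqref{equat100} truly applies uniformly to every family listed (in particular to the exponential family $F_p(x,y)=x^{1+u}y+x^{2-u}g_p^y$, where the exponential sum bound traces back to Propositions~3.6 and~3.7 of \cite{B}). Once the uniform exponent $\epsilon$ is extracted from those two results, the Fourier-analytic calculation above is essentially mechanical and produces admissible $\delta,\delta'\in(0,1)$.
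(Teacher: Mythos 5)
Your proposal is correct and takes the same Sárközy-style route the paper sketches: expand the solution count via additive characters, isolate the main term $|A||B||C||D|/p$, bound the inner $C\times D$ sum by \eqref{equat100}, and control the $A$- and $B$-sums by Cauchy--Schwarz plus Parseval, yielding an error $O\bigl((|A||B|)^{1/2}|C||D|p^{-\epsilon}\bigr)$. The one loose end is the phrase ``for any $p$'' in the proposition: your inequality $M\gg|E|$ forces $N\ge 1$ only once $p$ is large enough to absorb the implied constant, and you do not say what to do for the finitely many small primes; the paper closes this by shrinking $\delta'$ so that for $p\le p_0$ the hypothesis $|A||B|>p^{2-\delta'}$ already forces $A=\mathbb{F}_p$ or $B=\mathbb{F}_p$, in which case \eqref{equat101} is trivially solvable. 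Apart from that minor point, the two arguments are the same.
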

\begin{proof}[Sketch of the proof]
Let $N$ be the number of solutions of \eqref{equat101}. Then by
following S\'ark\"ozy's
argument and using the bound \eqref{equat100}, we obtain
$$
\Big|N-\frac{|A||B||C||D|}p\Big|\ll |A|^{1/2}|B|^{1/2}|C||D|p^{-\epsilon},
$$
which gives the result for $p$ large enough with $\delta'=\epsilon$.
For $p\le p_0$, it suffices to reduce
$\delta'$ in order to have also $p_0^{2-\delta'}\ge p_0^2-1$, and the result becomes
trivial since $|A||B|>p^{2-\delta'}$ implies either $A=\mathbb{F}_p$ or
$B=\mathbb{F}_p$.
\end{proof}

\bigskip

5. Note that the range of our function
$F(x,y,z)=a(z)xy+b(z)x^2g(y)+h(y,z)$ studied in section \ref{newsection}
is well-spaced i.e. the set $F(A,B,C)$
of elements of $\mathbb{F}_{p}$ of the form $F(x,y,z)$ where
$(x,y,z)\in A\times B\times C$,
intersects every not too long interval, provided the cardinalities
of the sets are $\asymp p^{\alpha}$ with $\alpha>1/2-\delta.$

The bound we obtain for the exponential sum $S$ in the proof
of theorem \ref{thmNewSE} yields the following result:

\begin{cor}
Let $\epsilon(\alpha)$ given by \eqref{equat11} and $\delta$ given in Corollary \ref{cor9}.
Let $L_1\le L_2$ be arbitrary positive real numbers, $F(x,y,z)\in\mathbb{Z}[x,y,z]$ as in theorem \ref{thmNewSE} and $A,B,C$ be subsets of
$\mathbb{F}_{p}$ with $L_1 p^{\alpha}\le |A|, |B|, |C|\le L_2p^{\alpha}$ where
$\alpha > 1/2-\delta$. Then $F(A,B,C)$ intersects every interval
$[u+1,u+L]$ in $\mathbb{F}_p$ provided
$L\gg  p^{1-\epsilon(\alpha)}$ where the implied constant depends only on
$F$, $L_1$ and $L_2$.
\end{cor}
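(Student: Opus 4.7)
The plan is to apply the classical Fourier completion method, using the nontrivial exponential sum bound of Theorem~\ref{thmNewSE} (equivalently, the bound $\max_{r\ne0}|S_r|\ll p^{3\alpha-\epsilon(\alpha)}$ established in the proof of Corollary~\ref{cor9} for $\alpha>1/2-\delta$). Write $J=[u+1,u+L]\subset\mathbb{F}_p$ and let $N(J)$ denote the number of triples $(x,y,z)\in A\times B\times C$ with $F(x,y,z)\in J$. The goal is to prove $N(J)>0$ under the stated lower bound on $L$.

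Expanding the indicator of $J$ in additive characters yields
\begin{equation*}
N(J)=\frac{L\,|A||B||C|}{p}+\frac{1}{p}\sum_{r\in\mathbb{F}_p\smallsetminus\{0\}}\overline{T_J(r)}\,S_r,
\end{equation*}
where $T_J(r):=\sum_{s\in J}\ee_r(s)$. By the hypothesis $|A|,|B|,|C|\asymp p^\alpha$, the main term has magnitude $\asymp L\,p^{3\alpha-1}$. For the error term I would use the standard estimate $\sum_{r\ne 0}|T_J(r)|\ll p\log p$ (which follows from $|T_J(r)|\le\min(L,1/\|r/p\|)$) together with the above uniform bound on $|S_r|$, yielding an error of order $O((\log p)\,p^{3\alpha-\epsilon(\alpha)})$. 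Positivity of $N(J)$ is then guaranteed whenever $L\,p^{3\alpha-1}$ dominates the error, i.e.\ whenever $L\gg (\log p)\,p^{1-\epsilon(\alpha)}$.

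The only (minor) obstacle in matching the statement exactly is the spurious $\log p$ factor, which cannot be absorbed into a constant depending only on $F,L_1,L_2$. It is removed by the trivial observation that $\log p\le p^\eta$ for any fixed $\eta>0$ and all sufficiently large $p$: one replaces $\epsilon(\alpha)$ by any strictly smaller positive exponent $\epsilon'(\alpha)$, which is legitimate since $\gamma(\alpha)$ in \eqref{equat11} is itself inexplicit, and the final bound $L\gg p^{1-\epsilon'(\alpha)}$ then subsumes the logarithmic loss. For the bounded range of $p$ the assertion becomes trivial upon enlarging the implicit constant. Beyond this cosmetic bookkeeping the proof is entirely routine, the substantive work having already been done in Theorem~\ref{thmNewSE}.
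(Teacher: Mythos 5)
Your outline is the standard Fourier completion argument, but it is not quite the paper's argument and it does not quite prove the stated corollary. You expand the raw indicator of $J=[u+1,u+L]$ in additive characters, which forces you to bound $\sum_{r\ne0}|T_J(r)|$ by the $L^1$ estimate $\ll p\log p$, and the resulting condition is $L\gg(\log p)\,p^{1-\epsilon(\alpha)}$, i.e.\ a genuine logarithmic loss. The paper instead convolves: it sets $I$ to be the indicator of $[1,L/2]$ and works with $I*I$, whose Fourier transform is $|\hat I_r|^2\ge0$, so that the error term is bounded by
$\frac1p\max_{r\ne0}|S_r|\sum_r|\hat I_r|^2=\max_{r\ne0}|S_r|\,I_0$
by Parseval with no $\log p$. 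The positivity and the $\ell^2$ computation on the Fej\'er-type weight $I*I$ are precisely what make the bound clean, and this is the idea your proposal is missing.

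Your proposed repair, namely replacing $\epsilon(\alpha)$ by a slightly smaller $\epsilon'(\alpha)$ on the grounds that $\gamma(\alpha)$ is inexplicit, does not prove the corollary as written: the statement fixes $\epsilon(\alpha)$ to be exactly the quantity \eqref{equat11}, and the implied constant is required to depend only on $F,L_1,L_2$ (in particular not on $p$ in a way that hides a $\log p$). Shrinking the exponent changes the statement rather than proving it, even if the weakened assertion would still be of interest. So the gap is real, though small, and is exactly the gap that the paper's $I*I$ device is designed to close. If you want to keep your structure, replace the raw indicator of $J$ by a nonnegative weight supported on $J$ whose Fourier coefficients are $\ell^2$-summable to $O(pL)$ (the convolution square of a shorter interval being the simplest choice), and your argument then reproduces the paper's bound without the logarithm.
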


For seek of completeness we include the proof.

\begin{proof}
Let $S(w)$ be the number of triples $(a,b,c)\in F(A,B,C)$ such that
$w=F(a,b,c)$. Let
$I=[1,L/2]$ and denote by  $I(w)$ its indicator. Then
$F(A,B,C)\cap [u+1,u+L]$ is not empty if and only if the real sum
$$
T=\sum_w S(w-u)I*I(-w)
$$
is not zero.
Denote the Fourier transform of the indicators of $S$ resp. $I$ by
$S_r$ resp. $I_r.$ By the Fourier inversion formula we have
$$
T=\frac1p\sum_r S_r{\overline{I_r^2}}\ee_r(-u) \ge
\frac{S_0I^2_0}p-\frac1p\sum_{r\neq 0} |S_r||I_r|^2= \frac1p
|A||B||C|I_0^2-\frac1p\sum_{r\neq 0} |S_r||I_r|^2.
$$
By the triangle inequality, the non trivial upper bound for $|S_r|$ when $r\ne0$ and by the
Parseval formula, \eqref{equat10} and \eqref{equat11} we get
\begin{equation*}
\Big|T-{1\over p}|A||B||C|I_0^2\Big|\leq \frac1p\sum_{r\neq 0} |S_r||I^2_r|\\
\le \frac1p\max_{r\ne0} |S_r|\sum_{r}|I^2_r|\ll
 p^{3\alpha-\epsilon(\alpha)} I_0.
\end{equation*}
Hence the set $F(A,B,C)\cap [u+1,u+L]$ is not empty if
$$
\frac1p|A||B||C|I_0\gg  p^{3\alpha-\epsilon(\alpha)}
$$
or equivalently if
$$
L\gg p^{1-\epsilon(\alpha)},
$$
as asserted.
\end{proof}

\vskip 1cm

\end{document}